\theoremstyle{plain}
\newtheorem{lem}{Lemma}[section]
\newtheorem{thm}[lem]{Theorem}
\theoremstyle{definition}
\newtheorem{defn}{Definition}[section]
\theoremstyle{remark}
\newtheorem{rem}{Remark}[section]
\begin{document}
\title{\MakeUppercase {\large\bf On inverse problems for several coupled pde systems arising in mathematical biology} }

\author{
Ming-Hui Ding\thanks{Department of Mathematics, City University of Hong Kong, Kowloon, Hong Kong, China.\ \  Email: mingding@cityu.edu.hk}
\and
Hongyu Liu\thanks{Department of Mathematics, City University of Hong Kong, Kowloon, Hong Kong, China.\ \ Email: hongyu.liuip@gmail.com; hongyliu@cityu.edu.hk}
\and
Guang-Hui Zheng\thanks{School of Mathematics, Hunan University, Changsha 410082, China.\ \ Email: zhenggh2012@hnu.edu.cn; zhgh1980@163.com}
}

\date{}
\maketitle

\begin{center}{\bf ABSTRACT}
\end{center}\smallskip

In this paper, we propose and study several inverse problems of identifying/determining unknown coefficients for a class of coupled PDE systems by measuring the average flux data on part of the underlying boundary. In these coupled systems, we mainly consider the non-negative solutions of the coupled equations, which are consistent with realistic settings in biology and ecology. There are several salient features of our inverse problem study: the drastic reduction of the measurement/observation data due to averaging effects, the nonlinear coupling of multiple equations, and the non-negative constraints on the solutions, which pose significant challenges to the inverse problems. We develop a new and effective scheme to tackle the inverse problems and achieve unique identifiability results by properly controlling the injection of different source terms to obtain multiple sets of mean flux data. The approach relies on certain monotonicity properties which are related to the intrinsic structures of the coupled PDE system. We also connect our study to biological applications of practical interest.

\smallskip
{\bf keywords}: Nonlinear coupled parabolic systems; mathematical biology; inverse coefficient problems; unique identifiability; monotonicity; comparison principle.


\section{Introduction}
\subsection{Mathematical setup and discussion of major findings}

Focusing mainly on the mathematics, but not the physical or biological applications, we first introduce the mathematical setup of the study. Let $\Omega$ be a bounded domain in $\mathbb{R}^N, N\geq1$ with a $C^{2+\gamma}$ boundary for some $\gamma\in(0,1)$. For each $T>0$, we set $D_T=\Omega\times (0,T],  S_T=\partial\Omega\times(0,T]$. We consider the following coupled nonlinear parabolic system with a Dirichlet boundary condition:
\begin{equation}\small
\label{utmt}
\begin{cases}
\displaystyle\frac{\partial u}{\partial t}-d_1\Delta u+\alpha\cdot\nabla u=l(x,t)u+a_1\bigg{(}-u^2-\sum_{k=1}^Mu^km^{M-k+1}\bigg{)}+q_1
\ \ \hspace*{1.0cm} &\mathrm{in}\  D_T,\medskip\\
\displaystyle \frac{\partial m}{\partial t}-d_2\Delta m+\beta\cdot\nabla m=v(x,t)m+a_2\bigg{(}-m^2-\sum_{k=1}^Mu^km^{M-k+1}\bigg{)}+q_2
\hspace*{0.65cm} &\mathrm{in}\ D_T,\medskip\\
\displaystyle  u=m=0\hspace*{3.7cm} &\mathrm{on}\  S_T,\medskip\\
u(x,0)=0,\  m(x,0)=0\ \hspace*{2.4cm} &\mathrm{in}\ \Omega,\medskip\\
\displaystyle u, m\geq 0\ \hspace*{2.4cm} &\mathrm{in}\ {\overline{D_T}},
\end{cases}
\end{equation}
where the diffusion coefficients $d_1, d_2$ are positive constants; the convection coefficients $\alpha, \beta$ are vectors of $\mathbb{R}^N$ with constant entries; $l(x,t)$, $v(x,t)\in C^{\gamma,\gamma/2}(\overline{D_T})$ are negative functions, and the functions $a_i$, $q_i\in C^{\gamma,\gamma/2}(\overline{D_T})$ for $i=1,2$. In Section~\ref{sect:nt1}, we shall define the function spaces used here. It is emphasised that $u$ and $m$ are required to be non-negative in ${\overline{D_T}}$. In related biological and ecology settings, $u$ and $m$ represent the probability density of some population, the non-negativity is crucial. More discussion on this aspect will be given in Section~\ref{sect:bm1}.

In this paper, we propose to study the inverse problem of identifying/determining the coefficients $l(x,t)$ and $v(x,t)$ in the coupled system \eqref{utmt} by knowledge of the average flux data of $u$ and $m$ on part of the boundary $\partial\Omega$. To that end, we introduce the following measurement maps for the system \eqref{utmt}:
\begin{align}\label{Fdata}
\mathcal{F}_{l,v}^+(a_1,a_2,q_1,q_2):=\int_{\Gamma_T}\frac{\partial u(x,t)}{\partial\nu}h(x,t)dxdt,\\
\label{Gdata}
\mathcal{G}_{l,v}^+(a_1,a_2,q_1,q_2):=\int_{\Gamma_T}\frac{\partial m(x,t)}{\partial\nu}g(x,t)dxdt,
\end{align}
where $a_1, a_2\in\mathcal{A}$ and $b_1, b_2\in\mathcal{B}$ with $\mathcal{A}$ and $\mathcal{B}$ signifying certain admissible classes of input parameters; $\Lambda$ is an open subset of $\partial\Omega$, ${\Gamma_T}=\Lambda\times (0,T]$; and $\nu(x)$ denotes the outward unit normal. In \eqref{Fdata}, the sign ``$+$" indicates that the measurement map is defined as associated with the non-negative solutions of the coupled system \eqref{utmt}. The measurement data \eqref{Fdata} and \eqref{Gdata} are the weighted integral data of the partial boundary, and the weight functions $g, h$ related to the measuring means \cite{ZD2020,DZ2021} shall be described in more details in what follows. The inverse problem can be expressed in the following general form
\begin{equation}\label{eq:ip1}
\mathcal{F}^+_{l,v}(a_1,a_2,q_1,q_2),\  \mathcal{G}^+_{l,v}(a_1,a_2,q_1,q_2),\ a_j\in\mathcal{A}, b_j\in\mathcal{B}, j=1,2 \longrightarrow l \ \mathrm{and} \ v.
\end{equation}
We are mainly concerned with the unique identifiability issue of the inverse problem \eqref{eq:ip1}. In a formal manner, our major finding can be roughly summarised into the following theorem.
\begin{thm}\label{thm1}
Suppose that $l_j, v_j\in \mathcal{S}$, $j=1, 2$, with $\mathcal{S}$ being a general a-priori function space.
Let $\mathcal{F}_{l_j,v_j}^+(a_1,a_2,q_1,q_2),\mathcal{G}_{l_j,v_j}^+(a_1,a_2,q_1,q_2)$ be the measurement maps associated to \eqref{utmt} for $j=1,2$.  If
\begin{align*}
\mathcal{F}^+_{l_1,v_1}(a_1,a_2,q_1,q_2)&=\mathcal{F}^+_{l_2,v_2}(a_1,a_2,q_1,q_2),\\  \mathcal{G}^+_{l_1,v_1}(a_1,a_2,q_1,q_2)&=\mathcal{G}^+_{l_2,v_2}(a_1,a_2,q_1,q_2),\
\end{align*}
for all $a_1, a_2\in\mathcal{A}$, $q_1, q_2\in\mathcal{B}$, then one has that
\begin{align*}
(l_1,v_1)=(l_2,v_2).
\end{align*}
\end{thm}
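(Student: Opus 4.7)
The plan is to reduce the inverse problem to an interior unique--continuation / duality statement for the difference of two solutions, and then exploit the freedom to vary the source quadruple $(a_1,a_2,q_1,q_2)$ together with the comparison principle for the cooperative/competitive structure of \eqref{utmt} to probe $l_1-l_2$ and $v_1-v_2$ pointwise.

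\textbf{Step 1: set up the difference system.} Let $(u_j,m_j)$ be the unique non-negative solution of \eqref{utmt} corresponding to $(l_j,v_j)$ with the same $(a_1,a_2,q_1,q_2)$, and put $U=u_1-u_2,\ V=m_1-m_2$. Subtracting the two $u$-equations, I would write $l_1 u_1-l_2 u_2=l_2 U+(l_1-l_2)u_1$ and linearize the polynomial nonlinearity $-u^2-\sum u^km^{M-k+1}$ in $(U,V)$ with coefficients that are polynomials in $u_1,u_2,m_1,m_2$ (hence bounded $C^{\gamma,\gamma/2}$ functions by the well-posedness theory recalled in Section~\ref{sect:nt1}). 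The outcome is a linear coupled parabolic system for $(U,V)$ with zero Cauchy--Dirichlet data and right-hand sides
\begin{equation*}
F_1=(l_1-l_2)\,u_1,\qquad F_2=(v_1-v_2)\,m_1 .
\end{equation*}

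\textbf{Step 2: convert the boundary observation into an interior identity.} The assumption $\mathcal F^+_{l_1,v_1}=\mathcal F^+_{l_2,v_2}$ and $\mathcal G^+_{l_1,v_1}=\mathcal G^+_{l_2,v_2}$ says that the weighted flux of $(U,V)$ through $\Gamma_T$ vanishes. I would introduce two adjoint test fields $(\phi,\psi)$ solving the backward adjoint of the linearized system, with terminal data zero and with a boundary datum on $S_T$ supported in $\Gamma_T$ and chosen so that the surface pairing against $(\partial_\nu U,\partial_\nu V)$ reproduces precisely the weights $(h,g)$. Integration by parts in $D_T$ then yields
\begin{equation*}
0=\int_{D_T}\bigl[(l_1-l_2)u_1\,\phi+(v_1-v_2)m_1\,\psi\bigr]\,dx\,dt .
\end{equation*}
This converts a single scalar measurement into one linear functional of the unknown pair $(l_1-l_2,v_1-v_2)$ tested against the product $(u_1\phi,m_1\psi)$.

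\textbf{Step 3: vary the source data and exploit monotonicity.} Because $(a_1,a_2,q_1,q_2)$ range over large admissible classes $\mathcal A,\mathcal B$, Step~2 produces a \emph{family} of such interior identities, parameterized by the input quadruple. The idea is to choose the sources so that, on the one hand, comparison and strong maximum-principle arguments force $u_1,m_1>0$ on a prescribed subdomain (this is where the non-negativity constraint and the structure of \eqref{utmt} are used: small, localized positive injections produce localized positive responses), and, on the other hand, the adjoint fields $(\phi,\psi)$ generated in Step~2 can be made to concentrate near any prescribed interior point of $D_T$ by a Runge-type approximation together with a CGO/shifting construction for the linear adjoint parabolic system. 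Sending $(q_1,q_2)$ through a family that localizes the supports of $(u_1\phi,m_1\psi)$ around arbitrary points and decoupling the two equations by first switching off $q_2$ (so $m_1\equiv 0$ vanishes on a set where $u_1>0$) isolates the $l$-term and forces $l_1=l_2$ pointwise; symmetrically for $v_1=v_2$.

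\textbf{Main obstacle.} The crux is Step~3: unlike a standard Dirichlet-to-Neumann framework, the data here is a single scalar per input, so the proof must explicitly construct source quadruples whose solutions together with the adjoint fields form a dense enough probing family in $L^2(D_T)$. The non-negativity constraint rules out generic linearizations around zero, and the nonlinear coupling through $\sum u^k m^{M-k+1}$ obstructs separating the $u$- and $m$-equations. This is why the argument must rely on monotonicity, which guarantees that increasing $q_i$ increases the corresponding solution and permits a controlled one-sided linearization where the comparison principle replaces the missing injectivity of the standard linearized map. The rest of the proof then reduces to routine density and regularity arguments in $C^{2+\gamma,1+\gamma/2}(\overline{D_T})$.
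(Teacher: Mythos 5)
Your Steps 1--2 are a reasonable duality reduction, but the proof is not complete: the entire identification is deferred to Step 3, and that step as sketched would not go through. First, the localization machinery you invoke (Runge-type approximation plus a CGO/shifting construction making the adjoint pair $(\phi,\psi)$ concentrate at arbitrary points) is not constructed, and it is particularly problematic here because your adjoint system is the adjoint of the \emph{linearized} coupled system, whose zeroth-order coefficients are polynomials in $u_1,u_2,m_1,m_2$ and hence change with every input quadruple $(a_1,a_2,q_1,q_2)$; a ``dense probing family'' argument built on input-dependent adjoints is exactly the hard part, and you acknowledge it without supplying it. Second, ``switching off $q_2$'' to force $m_1\equiv 0$ is not available: the hypothesis of the theorem only gives equality of the measurement maps for $q_1,q_2\in\mathcal{B}$, and members of $\mathcal{B}$ are strictly positive (positivity is also what guarantees the well-posedness and non-negativity of the solutions in Section~\ref{sect:3}), so you cannot decouple the system this way without an additional limiting argument that you do not provide.

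The paper's proof is much more elementary and uses precisely the structure of the admissible classes that your proposal never touches. For each fixed $l_j$ one takes a \emph{single, input-independent} backward adjoint $w(\cdot;l_j)$ solving $-\partial_t w-d_1\Delta w-\alpha\cdot\nabla w-l_j w=0$ with boundary datum the extended weight $h_0$ and terminal datum $0$; pairing the first equation of \eqref{utmt} with $w$ leaves the full nonlinear term on the interior side (no linearization, no difference system), and equality of the data gives
\begin{equation*}
\int_{D_T}\Bigl(-a_1^s u_{s,n}^2-a_1^s\textstyle\sum_{k=1}^M u_{s,n}^k m_{s,n}^{M-k+1}+q_1^n\Bigr)\bigl(w(\cdot;l_1)-w(\cdot;l_2)\bigr)\,dx\,dt=0 .
\end{equation*}
Then the defining property $a_1^s\to 0$ of $\mathcal{A}$, together with the uniform constant upper solutions from Theorem~\ref{forward} and dominated convergence, kills the nonlinearity in the limit; the completeness in $L^2$ of $\{q_1^n\}\subset\mathcal{B}$ yields $w(\cdot;l_1)=w(\cdot;l_2)$; subtracting the two adjoint equations gives $(l_1-l_2)w(\cdot;l_1)=0$, and the maximum principle ($h\ge 0$, $h\not\equiv 0$) gives $w(\cdot;l_1)>0$, hence $l_1=l_2$ (and symmetrically $v_1=v_2$). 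So the missing ideas in your proposal are exactly the two levers the admissible classes were designed for: sending the interaction coefficients to zero to linearize ``for free,'' and using completeness of the sources to identify the adjoint states -- rather than any CGO/Runge localization, which the paper deliberately avoids.
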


Indeed, in several different setups, we establish subtly different conditions for Theorem~\ref{thm1} to assure that the average flux data can uniquely identify/determine the coefficients $l, v$. Moreover, we can also consider the case that the homogeneous Dirichlet data on $S_T$ in \eqref{utmt} is replaced by the homogeneous Neumann data, and one measures the average Dirichlet data on $\Gamma_T$. In such a case, we can derive similar unique identifiability results for the associated inverse problems.

To provide a global view of our study, we next briefly discuss the main results of this paper, with proofs referring to Sections \ref{sect:3}--\ref{sect:5}. We first focus on the unique identification of the unknown coefficients $l(x,t)$ and $v(x,t)$ for three different coupled systems with a homogeneous Dirichlet boundary condition including the nonlinear coupled parabolic system, the linear coupled parabolic system, and the nonlinear coupled elliptic system. Secondly, we study the unique reconstruction of the coefficients for a coupled parabolic system with Neumann boundary conditions. The main results of the study are listed below:
\begin{itemize}
\item{For the inverse problem of the nonlinear coupled parabolic system \eqref{utmt}, the coefficients $l(x,t)$, $v(x,t)$  can be uniquely reconstructed by measuring $(\mathcal{F}^+_{l,v}$, $\mathcal{G}^+_{l,v})(a_1, a_2, q_1, q_2)$ with different inputs.}
\item{ For the inverse problem of a linear coupled parabolic system, we consider the inverse problem for the case with the coefficients $l(x),v(x)$ depending only on the spatial variable $x$.   The amount of average flux data required to reconstruct $(l(x), v(x))$ is greatly reduced compared to the amount of data required to determine the coefficients $l(x,t), v(x,t)$ (see Remark \ref{rem4.2}).}
\item{In the static case, the parabolic system is reduced to an elliptic system. For the inverse problem of the nonlinear coupled elliptic system, by measuring $(\mathcal{F}^+_{l,v},\mathcal{G}^+_{l,v})(a_1,a_2,q_1,q_2)$ with proper inputs, we can uniquely reconstruct the coefficients $l(x), v(x)$.}
\item{ We can extend the study by imposing homogeneous Neumann boundary condition on the nonlinear coupled parabolic systems \eqref{utmt}, and measuring the average Dirichlet data, and derive similar uniqueness results in identifying/determining the coefficients $l(x,t),v(x,t)$.}
\end{itemize}

\subsection{Background and motivation}\label{sect:bm1}

The research of coupled systems includes many models in classical scientific fields, such as biology\cite{May1976,S2018,Se2009}, ecology \cite{BB1984, LL1988,DY2002}, biochemistry \cite{M1977,F1977,MSR2010}, as well as physics and engineering \cite{GSS2006,GNH2009,ACR2007}. In biological evolution, the system \eqref{utmt} can portray the relationship between two competing species in an ecosystem, which is known as the Volterra-Lotka competition model. In this model, the right item $F_i\ (i=1,2)$ of system \eqref{utmt} is the more special form as follows \cite{M1975}:
\begin{align*}
F_1(x,t,u,m)&=l(x)u-a_1(x)u^2-b_1(x)um,\\
F_2(x,t,u,m)&=v(x)m-a_2(x)m^2-b_2(x)mu,
\end{align*}
where the positive function $a_i$ $(i=1,2)$ characterize the restriction effect between each population;
and the non-negative coefficient $b_i$ $(i=1,2)$ indicate the interaction between species; $l(x)$ and $v(x)$ denote the growth rates of the species. In particular, $u(x,t), m(x,t)$ represent the population densities, which are non-negative. In essence, the non-negative solution of the coupled system is closely related to the positivity or negativity of the coefficients of the right term of the equations; see e.g. \cite{P2012} and the references cited therein. In other classical models, $u$ and $m$ represent different quantities, depending on the physical setting and background from which the equations are derived, and they are both non-negative as well; see e.g. \cite{AAR2004,DLS2002,R2013}.

The problem we investigate is to uniquely identify $l(x,t)$ and $v(x,t)$ in system \eqref{utmt} by controlling the input coefficients $a_1, a_2$ and source $q_1, q_2$ to measure multiple sets the average flux data at the partial boundary. The study of this inverse problem is driven by practical applications. First, the coefficients $l(x,t), v(x,t)$ characterize certain properties of the system, say e.g., in the biological evolution of a population, $l(x,t)$ and $v(x,t)$ are the growth rates of the population. The reconstruction of the growth rates can predict the development of the population and adapt its behavior to a complex environment. Second, it is more natural to use boundary measurement data to study inverse problems, and the average flux data \eqref{Fdata},\eqref{Gdata} are readily measurable in practice \cite{DEM2009,MRMG2009}. Another practical scenario of our study is related to the control of biological population growth. In fact, we investigate the inverse problem by entering appropriate control coefficients $a_1, a_2$ and source terms $q_1,q_2$. This inverse problem is clearly related to the design of the growth of bacteria or yeast, by controlling the strength of the interaction of multiple populations (namely, the coefficient $a_1, a_2$ in system \eqref{utmt}) or by changing the PH of the environment ($q_1, q_2$ in system \eqref{utmt}) to quantify the growth patterns of populations to better understand the behavior of the ecosystem; see e.g. \cite{AL2023,HAM2022}.

Finally, we would like to briefly mention a few results on inverse problems for coupled parabolic systems in the literature.  Dou-Yamamoto \cite{DY2019} studied the inverse coefficient problem for two coupled Schr\"{o}dinger systems and obtained logarithmic stability results. Cristofol-Gaitan-Niinim$\mathrm{\ddot{a}}$ki-Poisson \cite{CGN2013} investigated an inverse problem of simultaneously identifying two discontinuous diffusion coefficients for a one-dimensional coupled parabolic system with one component observed. In \cite{RC2012}, a one-dimensional unique result is proved for the Lotka-Volterra competitive model corresponding to several non-constant coefficient inverse problems for two systems of parabolic equations. In these studies, the measurement/observation data used are all internal sub-domain data corresponding to a single event and the non-negativity of solutions does not play any role. In a recent paper \cite{LL2023}, the inverse boundary problem for a coupled parabolic system arising in biological applications was considered, and we shall discuss more on the technical differences in the next subsection.

\subsection{Technical developments and discussion}

The non-negativity constraint, and the drastic reduction of the measurement data due to the averaging effects as well as the partial-boundary measurement present significant challenges to the analysis of both the forward and inverse problems of the nonlinear coupled system.

In this paper, we first analyze the well-posedness of the forward problem for the nonlinear coupled parabolic system, the linear coupled parabolic system, and the nonlinear coupled elliptic system. In our analysis, we make essential use of the monotonic approach of the coupled system and its associated upper and lower solutions, which contributes to the study of the uniqueness of non-negative solutions to the forward problem. There are some related studies on the use of the method of upper and lower solutions as well as the associated monotone iteration to analyze the solutions of coupled systems, and we refer to \cite{AAR2004,ACR2007,BB1984}.

In the study of the inverse problem, we achieve the unique identifiability of the coefficients $l(x,t), v(x,t)$ or $l(x), v(x)$ by appropriately selecting the admissible classes $\mathcal{A}$ of the parameters $a_1, a_2$ and admissible classes $\mathcal{B}$ of source terms $q_1, q_2$. This method is different from constructing Complex-Geometric-Optics (CGO) solutions of coupled equations to fulfill the inverse problem \cite{LLX2009,LZ2022,LMZ2022}. In general, it is a high challenge to build non-negative CGO solutions. At present, we only know of non-negative solutions to parabolic equations established in \cite{LL2023}. Our proposed method precisely avoids the construction of non-negative CGO solutions of the coupled equations, and only needs to control the interaction coefficients $a_1,a_2$ and source terms $q_1, q_2$ to achieve the uniqueness of $l(x,t), v(x,t)$ while satisfying the non-negative constraints. The selection of the admissible class $\mathcal{A}$ for $a_1, a_2$ and the admissible class  $\mathcal{B}$ for $q_1, q_2$, is given in Section~\ref{sect:4}. In addition, this method also provides unique identifiability for the coupled system with a homogeneous Neumann boundary, and the results are similar to the proof for the homogeneous Dirichlet boundary condition; see Section~\ref{sect:5}.

The rest of the paper is organized as follows. In Section~\ref{sect:2}, we present some preliminaries and statements. Section~\ref{sect:3} is devoted to the study of the well-posedness of the forward problems. The proofs of the unique determination for the coupled systems are provided in Section~\ref{sect:4}. The unique determination results for the coupled parabolic system with the Neumann boundary are given in Section~\ref{sect:5}.

\section{Preliminaries and statements}\label{sect:2}
\subsection{Notations and basic setting}\label{sect:nt1}
For a fixed $0<\gamma<1$, the H\"{o}lder space $C^{k+\gamma}(\overline{\Omega})$  consists of those functions $u$ that are $k$-times continuously differentiable and whose $k^{th}$-partial derivatives are H\"{o}lder continuous with the H\"older-index being $\gamma$. We denote $D^l u:= \frac{\partial^{l_1}}{\partial x_1^{l_1}}...\frac{\partial^{l_n}}{\partial x_n^{l_n}}u$ for $l=(l_1,...,l_n)\in \mathbb{N}^n$ being a multi-index of order $|l|\leq k$. The associated norm is defined as
\begin{align*}
\parallel u\parallel_{C^{k+\gamma}(\overline{\Omega})}:=\sum_{|l|\leq k}\parallel D^l u\parallel_{\infty}+\sum_{\mid l\mid=k}\sup_{x\neq y}\frac{\mid D^l u(x)-D^lu(y)\mid}{\mid x-y\mid^\gamma}.
\end{align*}
Given $T\in(0,\infty]$, we consider the space $C^{k+\gamma,\frac{k+\gamma}{2}}(\overline{D_T})$ if $D^lD^j_t u$ exists and is H$\mathrm{\ddot{o}}$lder continuous with the exponent $\gamma$ in $x$ and $\frac{k+\gamma}{2}$ in $t$ for all $l\in \mathbb{N}^n, j\in \mathbb{N}$ with $|l|+2j\leq k$. The norm is defined as
\begin{align*}
\parallel u\parallel_{C^{2+\gamma,1+\gamma/2}(\overline{D_T})}:&=\sum_{|\gamma|+2j\leq k}\parallel D^lD^j_t u\parallel_{\infty}+\sum_{\mid\gamma\mid+2j=k}\sup_{t,x\neq y}\frac{\mid  u(x,t)-u(y,t)\mid}{\mid x-y\mid^\gamma}\\
&+\sum_{\mid\gamma\mid+2j=k}\sup_{x,t\neq t'}\frac{\mid  u(x,t)-u(x,t')\mid}{\mid t-t'\mid^{\gamma/2}}.
\end{align*}

\subsection{Well-posedness conditions}
In this subsection, we present the definition of the quasimonotone function, and upper, lower solutions of the system, which are crucial in proving the existence and uniqueness of positive solutions to the forward problem.

First, in order to appeal for a more general study, the right hand side (RHS) term $\mathbf{F}=(F_1,F_2)$ in the coupled system \eqref{utmt} can be replaced by the following general form:
\begin{align}\label{F1}
F_1(x,t,u,m)&=l(x,t)u+f_1(x,t,u,m),\\
\label{F2}
 F_2(x,t,u,m)&=v(x,t)m+f_2(x,t,u,m),
\end{align}
 where $(f_1, f_2)$ are continuously differentiable in $(u, m)$ for all $(u, m)\in J_1\times J_2$, with $J_1\times J_2$ being a bounded subset in $\mathbb{R}^2$. Then we introduce the definition of quasi-monotone function $(F_1, F_2)$ as follows.

\begin{defn}
Let the function $\mathbf{F}=(F_1, F_2)$ be given in \eqref{F1} and \eqref{F2}. If the following conditions are fulfilled,
\begin{align*}
\partial F_1/\partial m\geq 0,\ \ \partial F_2/\partial u\geq 0 \ \ \mathrm{for}\ (u, m)\in J_1\times J_2,
\end{align*}
then $\mathbf{F}$ is said to be quasi-monotonically nondecreasing. If
\begin{align*}
\partial F_1/\partial m\leq 0,\ \ \partial F_2/\partial u\leq0 \ \ \mathrm{for}\ (u, m)\in J_1\times J_2,
\end{align*}
then $\mathbf{F}$ is said to be quasi-monotonically nonincreasing.
\end{defn}

Next, we introduce the upper and lower solutions of the system \eqref{utmt}, denoted by $\mathbf{U}=(\tilde{u}, \tilde{m})$ and $\mathbf{V}=(\hat{u}, \hat{m})$, respectively, and state that its boundary value and initial value satisfy the following inequalities
\begin{eqnarray}\label{bini}
\begin{aligned}
\tilde{u}\geq0\geq\hat{u},\ \ &\tilde{m}\geq0\geq\hat{m} \ \hspace*{4.25cm} \ \mathrm{on}\ S_T,\\
\tilde{u}(x,0)\geq u(x,0)\geq\hat{u}(x,0),\ \ &\tilde{m}(x,0)\geq m(x,0)\geq\hat{m}(x,0)\ \hspace*{1.3cm}\ \mathrm{in}\ \Omega.
\end{aligned}
\end{eqnarray}
 We provide the precise definition of upper and lower solutions (cf. \cite{P2012}).
\begin{defn}\label{uplow}
Let $\mathbf{U}=(\tilde{u}, \tilde{m})\in C^{2+\gamma,1+\gamma/2}(\overline{{D_T}})\times C^{2+\gamma,1+\gamma/2}(\overline{{D_T}})$, and $\mathbf{V}=(\hat{u}, \hat{m})\in C^{2+\gamma,1+\gamma/2}(\overline{{D_T}})\times C^{2+\gamma,1+\gamma/2}(\overline{{D_T}})$ be solutions to \eqref{utmt} and satisfy the relation $\mathbf{U}\geq \mathbf{V}$. Here and also in what follows,  $\mathbf{U}\geq \mathbf{V}$ is an ordering relation in the component-wise sense, i.e., $\tilde{u}\geq\hat{u}$ and $\tilde{m}\geq\hat{m}$. It is also assumed that the inequalities in \eqref{bini} hold and moreover, if $(F_1, F_2)$ is quasi-monotonically nondecreasing,
 \begin{align*}
\tilde{u}_t-d_1\Delta\tilde{u}+\alpha\cdot\nabla \tilde{u}-F_1(x,t,\tilde{u},\tilde{m})&\geq 0\geq \hat{u}_t-d_1\Delta\hat{u}+\alpha\cdot\nabla \hat{u}-F_1(x,t,\hat{u},\hat{m}),\\
\tilde{m}_t-d_2\Delta\tilde{m}+\beta\cdot\nabla \tilde{m}-F_2(x,t,\tilde{u},\tilde{m})&\geq 0\geq \hat{m}_t-d_2\Delta\hat{m}+\alpha\cdot\nabla \hat{m}-F_2(x,t,\hat{u},\hat{m}),
\end{align*}
and if $(F_1, F_2)$ is quasi-monotonically nonincreasing,
\begin{align*}
\tilde{u}_t-d_1\Delta\tilde{u}+\alpha\cdot\nabla \tilde{u}-F_1(x,t,\tilde{u},\hat{m})&\geq 0\geq \hat{u}_t-d_1\Delta\hat{u}+\alpha\cdot\nabla \hat{u}-F_1(x,t,\hat{u},\tilde{m}),\\
\tilde{m}_t-d_2\Delta\tilde{m}+\beta\cdot\nabla \tilde{m}-F_2(x,t,\hat{u},\tilde{m})&\geq 0\geq \hat{m}_t-d_2\Delta\hat{m}+\alpha\cdot\nabla \hat{m}-F_2(x,t,\tilde{u},\hat{m}).
\end{align*}
Then $\mathbf{U}$ and $\mathbf{V}$ are respectively called the ordered upper and lower solutions of \eqref{utmt}.
\end{defn}

\section{Well-posedness of the forward problem}\label{sect:3}
In this section, we study the well-posedness of the forward problem for the nonlinear  coupled parabolic system, linear coupled parabolic system, and nonlinear coupled elliptic  system, i.e., the existence and uniqueness of a positive solution (coexistence state) to the respective forward problem. We first define the following sector
\begin{align*}
\langle\mathbf{U}, \mathbf{V}\rangle\equiv\{(u,m)\in  C^{2+\gamma,1+\gamma/2}(\overline{D}_T)\times C^{2+\gamma,1+\gamma/2}(\overline{D}_T); (\hat{u}, \hat{m})\leq(u,m)\leq(\tilde{u}, \tilde{m})\},
\end{align*}
where $(\hat{u}, \hat{m}), (\tilde{u}, \tilde{m})$ are the lower  and upper solutions of the Definition \ref{uplow}, respectively. Moreover, for any $G\in C^{\gamma,\gamma/2}(\overline{D_T})$ we denote
\begin{align*}
G_L:=\mathrm{ess}\inf_{D_T} G,\ \ G_S:=\mathrm{ess}\sup_{D_T}G.
\end{align*}
\subsection{Nonlinear coupled parabolic equations}
For the forward problem, we can consider a more general form of the coupled system \eqref{utmt} with the RHS term $F_i, (i=1,2)$  given by the following form
\begin{equation}\label{f12}
\begin{aligned}
F_1(x,t,u,m)=l(x,t)u-a_1(x,t)u^2-\sum_{k=1}^Mb_1^{(k)}(x,t)u^km^{M-k+1}+q_1(x,t),\\
F_2(x,t,u,m)=v(x,t)m-a_2(x,t)m^2-\sum_{k=1}^Mb_2^{(k)}(x,t)m^ku^{M-k+1}+q_2(x,t),
\end{aligned}
\end{equation}
where $M\geq 1$, and the coefficients $a_i, b_i^{(k)},q_i, i=1,2, k=1,...,M$, belong to $C^{\gamma,\gamma/2}(\overline{{D_T}})$. Next, we first show an auxiliary lemma of the coupled system, and its proof can be conveniently found in \cite{P2012}.

\begin{lem}\label{lem1}
Let $(\tilde{u}, \tilde{m})$ and $(\hat{u}, \hat{m})$ be respectively the ordered upper and lower solutions of \eqref{utmt}, and $\mathbf{F}=(F_1, F_2)$ be a $C^1$-function and quasi-monotonically nonincreasing (or nondecreasing) in $\langle\mathbf{U}, \mathbf{V}\rangle$. Then the equation \eqref{utmt} has a unique solution $(u, m)$ in  $\langle\mathbf{U}, \mathbf{V}\rangle$.
\end{lem}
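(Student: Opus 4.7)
\medskip

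\textbf{Proof proposal.} The plan is to prove existence by the monotone iteration method built on the upper/lower solution pair, and then derive uniqueness from a comparison argument exploiting the Lipschitz regularity of $\mathbf{F}$. I will carry the quasi-monotonically nondecreasing case in detail; the nonincreasing case is analogous but with the two components swapped in the iteration coupling.

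First, since $\mathbf{F}=(F_1,F_2)$ is $C^1$ on the bounded sector $\langle \mathbf{V},\mathbf{U}\rangle$, pick a constant $c>0$ so large that the maps $u \mapsto F_1(x,t,u,m)+cu$ and $m \mapsto F_2(x,t,u,m)+cm$ are both nondecreasing in their ``diagonal'' argument throughout the sector. Combined with the quasi-monotone nondecreasing assumption $\partial_m F_1 \geq 0$, $\partial_u F_2 \geq 0$, this makes the shifted right-hand side monotone in every variable. I then define two iteration sequences $\{(\overline{u}^{(n)},\overline{m}^{(n)})\}$ and $\{(\underline{u}^{(n)},\underline{m}^{(n)})\}$ by linear decoupled problems of the form
\begin{align*}
\partial_t \overline{u}^{(n)} - d_1\Delta \overline{u}^{(n)} + \alpha\cdot\nabla \overline{u}^{(n)} + c\,\overline{u}^{(n)} &= F_1(x,t,\overline{u}^{(n-1)},\overline{m}^{(n-1)}) + c\,\overline{u}^{(n-1)},\\
\partial_t \overline{m}^{(n)} - d_2\Delta \overline{m}^{(n)} + \beta\cdot\nabla \overline{m}^{(n)} + c\,\overline{m}^{(n)} &= F_2(x,t,\overline{u}^{(n-1)},\overline{m}^{(n-1)}) + c\,\overline{m}^{(n-1)},
\end{align*}
with homogeneous Dirichlet boundary data on $S_T$ and the prescribed initial data, and starting from $(\overline{u}^{(0)},\overline{m}^{(0)})=(\tilde{u},\tilde{m})$; analogously $(\underline{u}^{(0)},\underline{m}^{(0)})=(\hat{u},\hat{m})$ for the lower sequence. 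At each step these are standard linear parabolic Cauchy--Dirichlet problems with $C^{\gamma,\gamma/2}$ data, so by classical Schauder theory each iterate is well-defined and lies in $C^{2+\gamma,1+\gamma/2}(\overline{D_T})$.

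The second step is an induction using the parabolic maximum principle. I verify in order: (i) $\underline{u}^{(0)}\leq \underline{u}^{(1)}$, $\overline{u}^{(0)}\geq \overline{u}^{(1)}$ (and likewise for $m$), which follows by applying the maximum principle to the differences, using the defining inequalities of upper and lower solutions in Definition~\ref{uplow}; (ii) $\underline{u}^{(n)}\leq \overline{u}^{(n)}$ at every step; and (iii) monotonicity: $\underline{u}^{(n)}\leq \underline{u}^{(n+1)}$ and $\overline{u}^{(n+1)}\leq \overline{u}^{(n)}$. Each of these comparisons reduces to a linear parabolic inequality for the difference, whose right-hand side has a favourable sign thanks to the monotonicity of the shifted nonlinearity established in the first step. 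Consequently the sequences are monotone, uniformly bounded, and confined to the sector $\langle \mathbf{V},\mathbf{U}\rangle$; pointwise limits $(\overline{u},\overline{m})$ and $(\underline{u},\underline{m})$ therefore exist.

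Third, I upgrade the pointwise convergence to convergence in $C^{2+\gamma,1+\gamma/2}$. Since the right-hand sides of the iteration are uniformly bounded in $C^{\gamma,\gamma/2}(\overline{D_T})$, Schauder estimates give uniform bounds on the iterates in $C^{2+\gamma',1+\gamma'/2}$ for some $\gamma'<\gamma$; Arzel\`a--Ascoli then yields subsequential convergence in $C^{2,1}$, which combined with the monotonicity promotes the full sequence's convergence. The limits $(\overline{u},\overline{m})$ and $(\underline{u},\underline{m})$ are thus genuine classical solutions of \eqref{utmt} in the sector.

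Finally, the main obstacle, uniqueness within $\langle \mathbf{V},\mathbf{U}\rangle$, is handled as follows. Given any solution $(u,m)\in \langle \mathbf{V},\mathbf{U}\rangle$, I show by induction that $\underline{u}^{(n)}\leq u\leq \overline{u}^{(n)}$ and similarly for $m$ at every step; the inductive passage again uses the maximum principle on the linear parabolic equation satisfied by the difference, where the $C^1$ Lipschitz bound on $\mathbf{F}$ together with the $cu$-shift make the source term have the correct sign. Passing to the limit gives $\underline{u}\leq u\leq \overline{u}$, so it remains to prove $\overline{u}=\underline{u}$ and $\overline{m}=\underline{m}$. Writing $w=\overline{u}-\underline{u}$, $z=\overline{m}-\underline{m}$, the mean value theorem applied to $F_1,F_2$ produces a linear coupled parabolic system for $(w,z)$ with $L^\infty$ coefficients and zero initial/boundary data; the standard Gronwall-type energy estimate (or equivalently an iterated comparison using a large enough exponential weight in $t$) forces $w\equiv z\equiv 0$ on $\overline{D_T}$. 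The quasi-monotonically nonincreasing case proceeds along exactly the same lines, except that one couples the two sequences across the components in the iteration (e.g.\ using $\overline{m}^{(n-1)}$ in the equation for $\underline{u}^{(n)}$ and vice versa), which is precisely the structure already built into Definition~\ref{uplow} for that case. I expect the key technical care point to be the simultaneous bookkeeping of the two components in the inductive comparisons, particularly in the nonincreasing case.
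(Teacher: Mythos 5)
Your proposal is correct and follows essentially the same route as the paper, which does not prove Lemma~\ref{lem1} itself but cites \cite{P2012}, where precisely this monotone-iteration argument (shift by a large constant $c$, ordered upper/lower sequences via the maximum principle, passage to the limit by Schauder estimates, and a Gronwall/Lipschitz argument identifying the maximal and minimal limits) is given, including the cross-coupled iteration in the quasi-monotonically nonincreasing case. The only point to tighten is the regularity step: uniform H\"older bounds on the right-hand sides should be obtained by a short bootstrap (sup-norm bounds from the sector, then $L^p$/Schauder estimates), rather than assumed outright.
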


Using Lemma \ref{lem1},  we can show the uniqueness result for a positive solution of the system \eqref{utmt} when $(F_1, F_2)$ is given in the form \eqref{f12}.
\begin{thm}\label{forward}
Let $(F_1, F_2)$ be given by \eqref{f12} with  $a_i, b_i^{(k)},q_i \in C^{\gamma,\gamma/2}(\overline{{D_T}})$ for $i=1,2, k=1,..., M$. If $a_i, q_i$ are positive functions, and $b_i^{(k)}$ is non-negative, then the system \eqref{utmt} has a unique positive solution $(u, m)$ and satisfies
\begin{align*}
0<u\leq\tilde{u},\ \ 0<m\leq\tilde{m},
\end{align*}
where $(\tilde{u}, \tilde{m})$ is an upper solution.
\end{thm}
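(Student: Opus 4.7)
The strategy is to invoke Lemma \ref{lem1}, so the task reduces to verifying (i) that $\mathbf{F}=(F_1,F_2)$ is $C^1$ and quasi-monotone on a sector of non-negative functions, (ii) that such a sector is exhibited by an ordered pair of upper and lower solutions, and (iii) that the resulting non-negative solution is in fact strictly positive on $D_T$. Differentiating \eqref{f12} gives
\[
\frac{\partial F_1}{\partial m} = -\sum_{k=1}^M (M-k+1)\, b_1^{(k)}(x,t)\, u^k m^{M-k},
\qquad
\frac{\partial F_2}{\partial u} = -\sum_{k=1}^M (M-k+1)\, b_2^{(k)}(x,t)\, m^k u^{M-k},
\]
both nonpositive on $\{u,m\geq 0\}$ since each $b_i^{(k)}\geq 0$. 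Hence $\mathbf{F}$ is quasi-monotonically nonincreasing there, and being a polynomial in $(u,m)$ with Hölder coefficients it is manifestly $C^1$.

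For the lower solution I would take $\mathbf{V}=(0,0)$: the boundary and initial inequalities in \eqref{bini} are trivial, and the nonincreasing version of Definition \ref{uplow} requires $-F_1(x,t,0,\tilde{m})\leq 0$, which holds because $0^k=0$ kills every summand in \eqref{f12} and leaves $F_1(x,t,0,\tilde{m})=q_1>0$ (with a symmetric computation for $F_2$). For the upper solution I would take constants $\mathbf{U}=(K_1,K_2)$. Boundary and initial bounds are automatic; the required inequality $F_1(x,t,K_1,0)\leq 0$ again benefits from the vanishing of the coupling sum since $0^{M-k+1}=0$ for all $k=1,\dots,M$, and it reduces to
\[
a_1(x,t) K_1^2 - l(x,t) K_1 \geq q_1(x,t).
\]
Using $l<0$, $a_1>0$ together with continuity of the coefficients on $\overline{D_T}$, any choice such as $K_1 \geq \sqrt{q_{1,S}/a_{1,L}}$ suffices; an analogous $K_2$ settles the other component, and $\mathbf{V}\leq\mathbf{U}$ is immediate.

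Lemma \ref{lem1} then delivers a unique solution $(u,m)\in\langle\mathbf{U},\mathbf{V}\rangle$, so $0\leq u\leq K_1=\tilde u$ and $0\leq m\leq K_2=\tilde m$. To upgrade the lower bound to strict positivity on $D_T$, I would rewrite the $u$-equation as the linear parabolic equation
\[
u_t-d_1\Delta u+\alpha\cdot\nabla u - \Bigl[l - a_1 u - \sum_{k=1}^M b_1^{(k)} u^{k-1} m^{M-k+1}\Bigr] u = q_1 > 0,
\]
whose coefficients are bounded thanks to the a priori estimates just obtained, and then invoke the strong maximum principle (together with the Hopf boundary lemma if one needs the boundary behaviour) to conclude $u>0$ in $D_T$; the argument for $m$ is identical.

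The main obstacle I anticipate is the upper-solution construction: the coupling monomials $u^k m^{M-k+1}$ could in principle obstruct a naive constant super-solution in the nonincreasing quasi-monotone formulation. The saving observation is that evaluating $F_i$ with the other component set to zero annihilates all such monomials, which is exactly what makes a constant upper solution feasible here. Without that cancellation one would be forced to work with a time-dependent super-solution or to run the existence argument through a Schauder fixed-point scheme tailored to the coupled system.
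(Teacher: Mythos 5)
Your proposal is correct and follows essentially the same route as the paper: verify that $(F_1,F_2)$ is quasi-monotonically nonincreasing on the non-negative quadrant, take $(0,0)$ as lower solution and a positive constant pair as upper solution (the coupling monomials vanishing when the other component is zero, exactly as in the paper's reduction to $-l\tilde u+a_1\tilde u^2-q_1\geq 0$), and conclude via Lemma \ref{lem1}. Your only departures are cosmetic: a slightly different admissible constant ($K_1\geq\sqrt{(q_1)_S/(a_1)_L}$ instead of the quadratic root) and a more explicit strong-maximum-principle justification of the strict positivity, which the paper simply asserts from $q_i>0$.
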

\begin{proof}
Since $a_i, q_i$ are positive functions, $b_i^{(k)}$ is nonnegative, the function $(F_1, F_2)$ is quasi-monotonically nonincreasing in $\mathbb{R}^+\times \mathbb{R}^+$ and the trivial function $(0,0)=(\hat{u}, \hat{m})$ is a lower solution. Set the constant function $(\tilde{u}, \tilde{m})$ as a positive upper solution. If
\begin{align*}
\tilde{u}_t-d_1\Delta \tilde{u}+\alpha\cdot\nabla \tilde{u}\geq F_1(x,t,\tilde{u},\hat{m}),\\
\tilde{m}_t-d_2\Delta \tilde{m}+\beta\cdot\nabla \tilde{m}\geq F_2(x,t,\hat{u},\tilde{m}),
\end{align*}
then we have
\begin{align*}
-l(x,t)\tilde{u}+a_1(x,t)\tilde{u}^2-q_1(x,t)\geq0,\\
-v(x,t)\tilde{m}+a_2(x,t)\tilde{m}^2-q_2(x,t)\geq0.
\end{align*}
According to $l(x,t), v(x,t) <0$, we find
\begin{align*}
-l_{S}\tilde{u}+(a_1)_L\tilde{u}^2-(q_{1})_S\geq 0,\\
-v_{S}\tilde{m}+(a_2)_L\tilde{m}^2-(q_{2})_S\geq 0.
\end{align*}
Setting
\begin{align*}
 \tilde{u}>\frac{l_{S}+\sqrt{l_{S}^2+4(a_1)_L (q_1)_S}}{2(a_1)_L}>0,\\
 \tilde{m}>\frac{v_{S}+\sqrt{v_{S}^2+4(a_2)_L (q_2)_S}}{2(a_2)_L}>0,
\end{align*}
then $(\tilde{u}, \tilde{m})$ is a positive upper solution. The conclusion of the theorem follows from Lemma \ref{lem1}. Due to $q_i>0, i=1,2$, we obtain $(0,0)<(u,m)\leq(\tilde{u},\tilde{m})$. The proof is complete.
\end{proof}
\subsection{Linear coupled parabolic equations}
In this subsection,  we consider the well-posedness of the forward problem for a linearly coupled parabolic system. Here, we would like to point out that in addition to the biological application, it is also a useful model in gas-liquid reaction theory as well as in molecular multiphoton transitions \cite{GSS2006}. For linearly coupled system,  the RHS term $F_i\ (i=1,2)$ has the form
\begin{equation}\label{linf}
\begin{split}
&F_1=l(x,t)u+b_1(x,t)m+q_1(x,t),\\
&F_2=v(x,t)m+b_2(x,t)u+q_2(x,t),
\end{split}
\end{equation}
where $b_i, q_i\in C^{\gamma,\gamma/2}(\overline{D_T})$ for $i=1, 2$.

\begin{thm}\label{thm-lin}
Let $(F_1, F_2)$ be given by \eqref{linf} with $b_i, q_i\in C^{\gamma,\gamma/2}(\overline{{D_T}})$ for $i=1,2$. If  $b_i$ is a non-negative function, and $q_i$ is positive, then the system \eqref{utmt} has a unique positive solution $(u, m)$ and satisfies
\begin{align*}
0<u\leq\tilde{u},\ \ 0<m\leq\tilde{m},
\end{align*}
where $(\tilde{u}, \tilde{m})$ is an upper solution.
\end{thm}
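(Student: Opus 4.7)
The plan is to mirror the proof of Theorem~\ref{forward}: I would reduce the claim to an application of Lemma~\ref{lem1} by exhibiting ordered lower and upper solutions between which $(F_1,F_2)$ is quasi-monotone. The first step is to read off the monotonicity type of the right-hand side \eqref{linf}. Since $\partial F_1/\partial m = b_1 \geq 0$ and $\partial F_2/\partial u = b_2 \geq 0$, the pair $(F_1,F_2)$ is quasi-monotonically \emph{nondecreasing} on $\mathbb{R}^+\times\mathbb{R}^+$. Note that this is the opposite branch compared to Theorem~\ref{forward}, so the relevant upper/lower solution inequalities from Definition~\ref{uplow} are those involving $F_i(x,t,\tilde u,\tilde m)$ and $F_i(x,t,\hat u,\hat m)$ rather than the mixed-argument form used in the nonlinear case.

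For the lower solution I would take $(\hat u,\hat m)=(0,0)$: the parabolic operator vanishes on zero, and $F_i(x,t,0,0)=q_i(x,t)>0$ yields $0 \leq F_i(x,t,0,0)$; the homogeneous boundary and initial conditions are satisfied trivially. For the upper solution I would seek a positive constant pair $(\tilde u,\tilde m)$, which reduces the upper-solution requirement to the pointwise inequalities
\begin{align*}
-l(x,t)\tilde u - b_1(x,t)\tilde m &\geq q_1(x,t),\\
-v(x,t)\tilde m - b_2(x,t)\tilde u &\geq q_2(x,t).
\end{align*}
Using $l,v<0$ together with the $(\cdot)_L,(\cdot)_S$ notation, a sufficient condition is the $2\times 2$ linear system
\begin{align*}
(-l_S)\tilde u - (b_1)_S \tilde m &\geq (q_1)_S,\\
(-v_S)\tilde m - (b_2)_S \tilde u &\geq (q_2)_S,
\end{align*}
whose coefficient matrix is of M-matrix type. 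Provided $(-l_S)(-v_S) > (b_1)_S(b_2)_S$, one can solve this explicitly (e.g., by Cramer's rule) for strictly positive $\tilde u,\tilde m$.

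With $(0,0)\leq (\tilde u,\tilde m)$ in hand, Lemma~\ref{lem1} produces a unique $(u,m)\in\langle \mathbf{U},\mathbf{V}\rangle$, and strict positivity $u,m>0$ follows from $q_i>0$ together with a strong-maximum-principle argument (the trivial pair cannot solve the system since $q_i\not\equiv 0$). The step I expect to carry the actual content is the construction of the constant upper solution: the M-matrix condition $(-l_S)(-v_S) > (b_1)_S(b_2)_S$ is a genuine smallness requirement on the cross-coupling relative to the (negative) diagonal terms. Without it, constant pairs cannot serve as upper solutions and one would have to build space/time-dependent upper solutions from eigenfunctions of the associated elliptic operator, which is more delicate. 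Since the theorem statement merely references ``an upper solution $(\tilde u, \tilde m)$'', this is likely either built in implicitly or absorbed into the construction, exactly as in the proof of Theorem~\ref{forward}.
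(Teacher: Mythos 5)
Your reduction to Lemma~\ref{lem1} and your identification of the monotone type are correct: with $\partial F_1/\partial m=b_1\geq 0$ and $\partial F_2/\partial u=b_2\geq 0$ the pair is quasi-monotonically nondecreasing, $(0,0)$ is a lower solution, and strict positivity of $(u,m)$ from $q_i>0$ is as in the paper. The genuine gap is in the upper-solution step: your constant ansatz only works under the extra M-matrix condition $(-l_S)(-v_S)>(b_1)_S(b_2)_S$, which is \emph{not} among the hypotheses of the theorem (the statement allows arbitrary nonnegative bounded $b_i$), and your closing remark that this condition is ``likely built in implicitly'' is not correct. Indeed, if the coupling dominates the diagonal (say $b_1b_2>lv$ pointwise), multiplying the two inequalities $-l\tilde u\geq b_1\tilde m+q_1$ and $-v\tilde m\geq b_2\tilde u+q_2$ shows no positive constant pair can be an upper solution, so your construction genuinely fails in that regime. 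The mechanism that made constants work in Theorem~\ref{forward} was the quadratic absorption $-a_1u^2$ with $a_1>0$; the linear system has no such term, and a linear negative diagonal cannot absorb large linear coupling.

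The paper closes this gap without any smallness assumption by using a \emph{time-dependent} upper solution rather than eigenfunction constructions: let $p_i\geq 0$ solve the uncoupled forced problems $(p_1)_t-d_1\Delta p_1+\alpha\cdot\nabla p_1=q_1$, $(p_2)_t-d_2\Delta p_2+\beta\cdot\nabla p_2=q_2$ with the same zero boundary and initial data, and set
\begin{align*}
\tilde u=p_1+\rho e^{\delta t},\qquad \tilde m=p_2+\rho e^{\delta t},\qquad \rho>0 .
\end{align*}
The upper-solution inequalities of Definition~\ref{uplow} then reduce to
\begin{align*}
\delta\rho\geq (lp_1+b_1p_2)e^{-\delta t}+(l+b_1)\rho,\qquad
\delta\rho\geq (vp_2+b_2p_1)e^{-\delta t}+(v+b_2)\rho,
\end{align*}
which hold for $\delta$ sufficiently large because $p_i$, $l$, $v$, $b_i$ are bounded on $\overline{D_T}$; the term $\delta\rho e^{\delta t}$ contributed by the time derivative absorbs the coupling regardless of its size. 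To repair your argument under the stated hypotheses you would need to replace the constant pair by such a construction (or actually carry out the eigenfunction/exponential alternative you only mention), since as written your proof establishes the theorem only under an additional, unstated restriction on $b_1,b_2$.
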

\begin{proof}
 Since $b_i\geq 0$, and $q_i$ is a positive function, the function $(F_1, F_2)$ is quasi-monotonically nondecreasing in $\mathbb{R}^+\times \mathbb{R}^+$ and $(0,0)=(\hat{u}, \hat{m})$ is a lower solution. Consider the solution $p_i$ of the following linear equation
\begin{align*}
&\displaystyle (p_1)_t-d_1\Delta p_1+\alpha\cdot\nabla p_1=q_1\ \ \mathrm{in}\ D_T,\\
&\displaystyle (p_2)_t-d_2\Delta p_2+\beta\cdot\nabla p_2=q_2\ \ \mathrm{in}\ D_T,
\end{align*}
where the boundary and initial conditions are the same as those in \eqref{utmt}. By the maximum principle, we have $p_i\geq 0$ in $\overline{D_T}$. For some positive constants $\rho$ and $\delta$, we define
\begin{align*}
 \tilde{u}=p_1+\rho e^{\delta t},\\
  \tilde{m}=p_2+\rho e^{\delta t}.
 \end{align*}
 Notice that $p_i$ is bounded in $\overline{D_T}$, there exists a adequate large $\delta$ for any $\rho>0$, such that
\begin{align*}
\delta\rho\geq (lp_1+b_1p_2)e^{-\delta t}+(l+b_1)\rho,\\
\delta\rho\geq (vp_2+b_2p_1)e^{-\delta t}+(v+b_2)\rho,
\end{align*}
then $(\tilde{u}, \tilde{m})$ is a positive upper solution. From Lemma \ref{lem1} it follows that $(0,0)\leq(u,m)\leq(\tilde{u},\tilde{m})$. Due to $q_i>0$ for $i=1, 2$, we have $(0,0)<(u,m)\leq(\tilde{u},\tilde{m})$. The proof is complete.
\end{proof}

\begin{rem}
For the coupled system \eqref{utmt} with the function $(F_1, F_2)$ given by \eqref{f12} or \eqref{linf}, if the Dirichlet boundary condition is replaced to be a Neumann boundary condition or a Robin boundary condition, the existence and uniqueness of a positive solution to the coupled system can be proved in a similar manner.
\end{rem}
\subsection{Coupled elliptic equations}
In this subsection, we consider a coupled elliptic system. For this system, the analysis of the existence and uniqueness of a positive solution to the coupled elliptic equations is further complicated by the absence of the term for the time derivative.
\begin{equation}
\label{umell}
\begin{cases}
\displaystyle -d_1\Delta u-l(x)u=-b_1\sum_{k=1}^Mu^km^{M-k+1}+q_1(x)
\hspace*{1.0cm}\ &\mathrm{in}\ \Omega,\medskip\\
\displaystyle -d_2\Delta m-v(x)m=-b_2\sum_{k=1}^Mu^km^{M-k+1}+q_2(x)
\hspace*{.7cm} &\mathrm{in}\ \Omega,\medskip\\
\displaystyle  u=m=0\hspace*{7.1cm} &\mathrm{on}\ \partial\Omega,\medskip\\
\displaystyle u>0,\  m>0\ \hspace*{8.4cm} &\mathrm{in}\ \Omega,
\end{cases}
\end{equation}
where $b_i\in\mathbb{R}$, $q_i\in C^{\gamma}(\overline{\Omega})$ for $i=1, 2$, $l(x), v(x)\in C^{\gamma}(\overline{\Omega})$. Let the RHS term $F_i$ of the system \eqref{umell} be given by the following form
\begin{align}\label{Fi}
F_i(x,u,m)=b_if(x,u,m)+q_i(x),\ i=1,2,
\end{align}
where $f$ is given by
\begin{align*}
f(x,u,m)=-\sum_{k=1}^Mu^km^{M-k+1}.
\end{align*}

Next, we provide the following uniqueness theorem for the positive solution of the steady-state problem \eqref{umell}.
\begin{thm}
Assume that $(F_1, F_2)$ is given by \eqref{Fi} with $b_i\in \mathbb{R}, q_i\in C^{\gamma, \gamma/2}$ for $i=1, 2$. If $b_i$ is a positive constant, and $q_i$ is a positive function, then there exists a unique positive solution $(u_s,v_s)$ to \eqref{umell} such that
\begin{align*}
(0,0)< (u_s, m_s)\leq (w_1^*, w_2^*),
\end{align*}
where $(w^*_1,w^*_2)$ is the positive solution of the following equations 
\begin{align*}
\begin{cases}
-d_1\Delta w_1-l(x)w_1(x)=q_1(x) \ \  &\mathrm{in}\ \  \Omega,\medskip\\
-d_2\Delta w_2-v(x)w_2(x)=q_2(x) \ \ &\mathrm{in}\ \  \Omega,\medskip\\
w_1=w_2=0\ \ \ &\mathrm{on}\ \partial\Omega.
\end{cases}
\end{align*}
\end{thm}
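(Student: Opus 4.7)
The strategy is to extend the upper/lower solution method of Theorem~\ref{forward} to the elliptic setting, with the trivial pair $(0,0)$ as the lower solution and the pair $(w_1^*,w_2^*)$ solving the decoupled linear problems as the upper solution, and then to invoke the elliptic analogue of Lemma~\ref{lem1}. First I would verify quasi-monotonicity: with $b_i>0$ and $F_i$ as in \eqref{Fi},
\[
\partial_m F_1=-b_1\sum_{k=1}^M(M-k+1)\,u^k m^{M-k}\le 0,\qquad \partial_u F_2\le 0
\]
on $\mathbb{R}^+\times\mathbb{R}^+$, so $(F_1,F_2)$ is $C^1$ and quasi-monotonically nonincreasing on every bounded subsector. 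The trivial pair $(\hat u,\hat m)=(0,0)$ is a lower solution since every cross-term $u^k m^{M-k+1}$ vanishes whenever either argument is zero, reducing the required inequality to $0\le q_i$, which holds by hypothesis.

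Next I would handle the upper solution. Since $l,v$ are strictly negative on $\overline\Omega$, the operators $-d_1\Delta-l$ and $-d_2\Delta-v$ are uniformly elliptic with strictly positive zeroth-order coefficients, so classical Schauder theory together with $q_i>0$ produces unique solutions $w_i^*\in C^{2+\gamma}(\overline\Omega)$, and the strong maximum principle forces $w_i^*>0$ in $\Omega$. The verification that $(w_1^*,w_2^*)$ is an upper solution is immediate: evaluating the nonlinearity at the lower solution $\hat m\equiv 0$ makes all cross-terms drop out, so the defining equation for $w_1^*$ realises the required inequality with equality, and similarly for $w_2^*$. Thus $(0,0)\le(w_1^*,w_2^*)$ is an ordered pair of lower and upper solutions.

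Applying the elliptic analogue of Lemma~\ref{lem1} (cf.~\cite{P2012}) then yields a unique solution $(u_s,m_s)\in\langle(0,0),(w_1^*,w_2^*)\rangle$. To promote non-negativity to strict positivity, I would rewrite
\[
-d_1\Delta u_s+\Bigl(-l(x)+b_1\sum_{k=1}^M u_s^{k-1} m_s^{M-k+1}\Bigr)u_s=q_1(x)>0,
\]
which exhibits $u_s$ as a non-negative solution of a coercive linear elliptic equation (the parenthesised zeroth-order coefficient being bounded below by $-l(x)>0$) with strictly positive right-hand side; the strong maximum principle then excludes $u_s\equiv 0$ and forces $u_s>0$ in $\Omega$, and analogously $m_s>0$. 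I expect the main obstacle to be the uniqueness step within the sector: for a quasi-monotonically \emph{nonincreasing} nonlinearity the two monotone iteration sequences launched from the lower and upper solutions produce only a maximal and a minimal sector solution in general, and demonstrating that they coincide requires the $C^1$ bounds on $\partial F_i/\partial u$ and $\partial F_i/\partial m$ over the sector together with an elliptic comparison argument, i.e., reproving the elliptic counterpart of Lemma~\ref{lem1} rather than merely citing its parabolic form.
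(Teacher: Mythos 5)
Your verification of quasi-monotonicity, of $(0,0)$ as a lower solution, of $(w_1^*,w_2^*)$ as an upper solution, and the final strong-maximum-principle argument for strict positivity all match the paper. But the central step is missing: you write that ``the elliptic analogue of Lemma~\ref{lem1} yields a unique solution'' in the sector, and then concede at the end that for a quasi-monotonically nonincreasing nonlinearity one only gets a maximal and a minimal sector (quasi-)solution and that their coincidence ``requires the $C^1$ bounds \ldots together with an elliptic comparison argument.'' That deferred step is precisely the theorem's content, and the generic route you sketch does not close it: competition-type elliptic systems with nonincreasing quasi-monotone coupling can genuinely admit several ordered solutions in a sector, and no amount of $C^1$ bounds plus comparison alone rules this out without a smallness or structural hypothesis. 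So the proposal as written has a gap exactly where the real work lies.

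The paper closes this gap by exploiting the special structure of \eqref{Fi}, namely that both equations carry the \emph{same} nonlinearity $f$ scaled by the positive constants $b_1,b_2$. It first realises the extremal quasi-solutions as $t\to\infty$ limits of the parabolic problem \eqref{um} started from $(w_1^*,0)$ and $(0,w_2^*)$ (Theorem 10.4.3 of \cite{P2012}), obtaining pairs $(\overline{U}_s,\underline{V}_s)$ and $(\underline{U}_s,\overline{V}_s)$ trapped between $(0,0)$ and $(w_1^*,w_2^*)$. Setting $\xi_1=\overline{U}_s-\underline{U}_s\ge 0$, $\xi_2=\overline{V}_s-\underline{V}_s\ge 0$, the difference equations have right-hand sides $b_1\bigl(f(x,\overline{U}_s,\underline{V}_s)-f(x,\underline{U}_s,\overline{V}_s)\bigr)$ and $b_2\bigl(f(x,\underline{U}_s,\overline{V}_s)-f(x,\overline{U}_s,\underline{V}_s)\bigr)$, which are exact negatives up to the factors $b_1,b_2$; multiplying by $b_2$ and $b_1$ and adding cancels the nonlinearity entirely and gives $-\Delta W=b_2l(x)\xi_1+b_1v(x)\xi_2\le 0$ for $W=b_2d_1\xi_1+b_1d_2\xi_2$, with $W=0$ on $\partial\Omega$. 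The maximum principle forces $W\le 0$, while $W\ge 0$ by construction, so $W\equiv 0$, hence $\xi_1=\xi_2=0$, and uniqueness follows from Corollary 10.4.2 of \cite{P2012}. This cancellation, together with the sign condition $l,v<0$, is the key idea your proposal lacks; without it the uniqueness claim in your third step is unsupported.
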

\begin{proof}
As $b_i$ is a positive constant, $q_i$ is a positive function,  the function $(F_1,F_2)$ is quasi-monotonically nonincreasing in $[0,\rho_1]\times[0,\rho_2]$ for $w_i^*\leq \rho_i$, and the following inequality holds
\begin{align*}
-d_1\Delta w_1^*-l(x)w_1^*- F_1(x,w_1^*,0)\geq 0,\\
-d_2\Delta w_2^*-v(x)w_2^*- F_2(x,0,w_2^*)\geq 0.
\end{align*}
  Thus, we obtain that the pairs $(w_1^*, w_2^*)$ and $(0,0)$ are ordered upper and lower solutions of the steady-state problem \eqref{umell}, respectively. The time-dependent solutions $(\overline{U}, \underline{V})$ and $(\underline{U}, \overline{V})$ of the following equation
\begin{align}
\label{um}
\begin{cases}
\displaystyle u_t-d_1\Delta u-l(x)u=-b_1\sum_{k=1}^Mu^km^{M-k+1}+q_1(x)
\hspace*{1.0cm}\ &\mathrm{in}\  D_T,\medskip\\
\displaystyle m_t-d_2\Delta m-v(x)m=-b_2\sum_{k=1}^Mu^km^{M-k+1}+q_2(x)
\hspace*{.7cm} &\mathrm{in}\ D_T,\medskip\\
\displaystyle u=m=0\hspace*{7.1cm} &\mathrm{on}\ S_T,\medskip\\
\displaystyle u(x,0)=u_0,\  m(x,0)=m_0\ \hspace*{4.8cm} &\mathrm{in}\ \Omega,
\end{cases}
\end{align}
correspond to $(u_0, m_0)=(w_1^*,0)$ and $(u_0,m_0)=(0, w_2^*)$, respectively. As $t\rightarrow \infty$, the solutions $(\overline{U}, \underline{V})$ and $(\underline{U}, \overline{V})$ monotonically converge to the steady state solutions $(\overline{U}_s, \underline{V}_s)$ and $(\underline{U}_s, \overline{V}_s)$ of \eqref{umell}, respectively, and satisfy (see Theorem 10.4.3 of \cite{P2012})
\begin{align*}
0\leq \underline{U}_s\leq \overline{U}_s\leq w_1^*,\\
0\leq \underline{V}_s\leq \overline{V}_s\leq w_2^*.
\end{align*}
We now claim $\underline{U}_s=\overline{U}_s$ and $\underline{V}_s=\overline{V}_s$. To verify this assertion, letting $\xi_1=\overline{U}_s-\underline{U}_s$, $\xi_2=\overline{V}_s-\underline{V}_s$, we have
\begin{equation}\label{xi1}
\begin{split}
-d_1\Delta\xi_1-l(x)\xi_1=b_1(f(x,\overline{U}_s, \underline{V}_s)-f(x,\underline{U}_s, \overline{V}_s)),\\
-d_1\Delta\xi_2-v(x)\xi_2=b_2(f(x,\underline{U}_s, \overline{V}_s)-f(x,\overline{U}_s, \underline{V}_s)).
\end{split}
\end{equation}
The first equation of \eqref{xi1} is multiplied by $b_2$ and the second by $b_1$, followed by addition, we find
\[
-\Delta W=b_2l(x)\xi_1+ b_1v(x)\xi_2\leq 0,
 \]
where $W=b_2 d_1\xi_1+b_1 d_2\xi_2$, and $W=0$ on $\partial\Omega$. According to the maximum principle, we have $W\leq 0$ in $\overline{\Omega}$. The nonnegative property of $\xi_1$ and $\xi_2$ implies that $W=0$. This shows that $\overline{U}_s=\underline{U}_s$ and $\overline{V}_s=\underline{V}_s$. By \cite[Corollary 10.4.2]{P2012}, if and only if $\underline{U}_s=\overline{U}_s$ and $\underline{V}_s=\overline{V}_s$, the system \eqref{umell} admits a unique steady-state solution, and satisfies $(0,0)< (u_s, m_s)\leq (w_1^*, w_2^*)$. The proof is complete.
\end{proof}

\section{Unique identifiability results of the inverse problems}\label{sect:4}
\subsection{Admissible class}
In this subsection,  we introduce the admissible conditions on the coefficients $a_1, a_2$ and source terms $q_1, q_2$. The choice of admissible class is crucial for the inverse problem, and we first provide the definitions of the admissible class as follows.
\begin{defn}
The function set $\{a^n(x,t)\}_{n=1}^\infty$ is admissible, denoted by
$\{a^n\}_{n=1}^{\infty}\in \mathcal{A}$, if it satisfies the following conditions:

(i) $a^n\in C^{\gamma,\gamma/2}(\overline{D_T})$ and $a^n>0$ for any $n\in\mathds{N}$.

(ii)$\lim\limits_{n\rightarrow\infty}a^n=0$.
\end{defn}
\begin{defn}
The function set $\{q^n(x,t)\}_{n=1}^\infty$ is admissible, denoted by
$\{q^n\}_{n=1}^{\infty}\in \mathcal{B}$, if the following  conditions are fulfilled:

 (i) $q^n\in C^{\gamma,\gamma/2}(\overline{D_T})$ and $q^n>0$ for any $n\in\mathds{N}$.

 (ii) $\{q^n(x,t)\}_{n=1}$ is a complete set in $L^2(\overline{D_T})$.
\end{defn}

To make these definitions clearer, we have the following remarks.
\begin{rem}
For the admissible classes $\mathcal{A}$ and $\mathcal{B}$, we restrict those functions to be positive. This restriction ensures the positivity of the solution for a coupled system. In the analysis of the well-posedness of the forward problems in Section \ref{sect:3}, the positivity of the coefficients $a_1,a_2$ and the source terms $q_1, q_2$ determine the positivity of the solution in a coupled system.
\end{rem}

\begin{rem}
For the admissible class $\mathcal{B}$, we require that the function set $\{q^n\}_{n=1}^\infty$ is positive and complete in $L^2(\overline{D_T})$. We would like to emphasize that such sequences exist. In fact, without loss of generality and by rigid transformations if necessary, we can assume that the domain $\Omega$ is contained in the first quadrant, i.e. $\Omega\Subset\{x=(x^1, x^2,\ldots, x^N)\in\mathbb{R}^N; x^j\geq 0, j=1,2,\ldots, N\}$. In such a case, the polynomial functions $\{1, x, t, tx,...\}$ form a complete set in $L^2(\overline{D_T})$ and the positivity of those base functions are obviously guaranteed.
\end{rem}

\subsection{Uniqueness results for inverse problems associated with the nonlinear coupled parabolic system}
In this subsection, we present the unique identifiability result for the inverse problem associated with the coupled system \eqref{utmt}. In this inverse problem, it is required to obtain multiple sets of average flux data $(\mathcal{F}_{l,v}^+, \mathcal{G}_{l,v}^+)$ to achieve the uniqueness.

\begin{thm}\label{thm1-1}
Let $\mathcal{F}^+_{l_i,v_i}$ and $\mathcal{G}^+_{l_i,v_i}$ be the measurement maps associated to \eqref{utmt} for $i=1,2$, and the weight functions $h,g\in C_0^{\gamma,\gamma/2}(\Gamma_T)$ be nonnegative and nonzero. Assume that $v,l\in C^{\gamma,\gamma/2}(\overline{D_T})$ are negative functions. If
\begin{equation}
\label{umtfg3}
\begin{aligned}
\mathcal{F}_{l_1,v_1}^+(a_1^s,a_2^s,q_1^n,q_2^n)&=\mathcal{F}_{l_2,v_2}^+(a_1^s,a_2^s,q_1^n,q_2^n),\ \\
\mathcal{G}_{l_1,v_1}^+(a_1^s,a_2^s,q_1^n,q_2^n)&=\mathcal{G}_{l_2,v_2}^+(a_1^s,a_2^s,q_1^n,q_2^n),\
\end{aligned}
\end{equation}
for $a_i^s\in\mathcal{A}$, $q_i^n\in \mathcal{B}$ with $s, n\in\mathds{N}$, then it holds that
$$l_1(x,t)=l_2(x,t), v_1(x,t)=v_2(x,t)\
 \ in\ D_T.$$
\end{thm}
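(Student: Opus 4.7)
The plan is to exploit the two-parameter admissible structure: letting $s\to\infty$ drives $a_i^s\to 0$ and reduces the nonlinear system to two decoupled linear parabolic equations, after which completeness of $\{q_i^n\}$ in $L^2(\overline{D_T})$ together with a standard adjoint argument will recover $l$ and $v$. I first pass to the limit $s\to\infty$. Denoting by $(u_{s,n}^{(j)},m_{s,n}^{(j)})$ the solution of \eqref{utmt} with coefficients $(l_j,v_j)$ and inputs $(a_i^s,q_i^n)$, Theorem~\ref{forward} combined with standard continuous dependence on the coefficients $a_1^s,a_2^s$ shows that $(u_{s,n}^{(j)},m_{s,n}^{(j)})$ converges to the unique solution $(u_n^{(j)},m_n^{(j)})$ of the decoupled linear system $\partial_t u-d_1\Delta u+\alpha\cdot\nabla u=l_j u+q_1^n$, $\partial_t m-d_2\Delta m+\beta\cdot\nabla m=v_j m+q_2^n$ with zero Dirichlet and initial data. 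Passing to the limit in \eqref{umtfg3} yields the linear measurement identities $\int_{\Gamma_T}\partial_\nu u_n^{(1)}\,h\,dSdt=\int_{\Gamma_T}\partial_\nu u_n^{(2)}\,h\,dSdt$ and $\int_{\Gamma_T}\partial_\nu m_n^{(1)}\,g\,dSdt=\int_{\Gamma_T}\partial_\nu m_n^{(2)}\,g\,dSdt$ for every $n\in\mathbb{N}$.

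Since the limiting system is decoupled, I handle $l$ and $v$ separately. For $l$ set $w_n=u_n^{(1)}-u_n^{(2)}$; it satisfies $\partial_t w_n-d_1\Delta w_n+\alpha\cdot\nabla w_n-l_1 w_n=(l_1-l_2)u_n^{(2)}$ with zero boundary and initial data. I introduce the adjoint function $\psi$ solving $-\partial_t\psi-d_1\Delta\psi-\alpha\cdot\nabla\psi-l_1\psi=0$ in $D_T$ with $\psi|_{\Gamma_T}=h$, $\psi|_{S_T\setminus\Gamma_T}=0$, and $\psi(\cdot,T)=0$. Integration by parts in $D_T$ produces $-d_1\int_{\Gamma_T}\partial_\nu w_n\,h\,dSdt=\int_{D_T}(l_1-l_2)u_n^{(2)}\psi\,dxdt$, whose left-hand side is $0$ by Step~1.

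Writing $u_n^{(2)}=T_{l_2}q_1^n$ for the bounded linear solution operator associated with $l_2$ and $T_{l_2}^{\ast}$ for its $L^2$-adjoint (which solves a well-posed dual backward parabolic problem and is therefore injective), the integral identity becomes $\langle q_1^n,\,T_{l_2}^{\ast}[(l_1-l_2)\psi]\rangle_{L^2(\overline{D_T})}=0$ for every $n\in\mathbb{N}$. Completeness of $\{q_1^n\}$ forces $T_{l_2}^{\ast}[(l_1-l_2)\psi]=0$ and hence $(l_1-l_2)\psi=0$ a.e.\ in $D_T$. The time reversal $\tau=T-t$ converts the adjoint equation for $\psi$ into a forward parabolic equation with nonnegative Dirichlet data $h\geq 0$, $h\not\equiv 0$, so the strong maximum principle and the parabolic Harnack inequality show that $\psi>0$ on a sufficiently large open subset of $D_T$; combined with continuity of $l_1,l_2$, this forces $l_1\equiv l_2$ in $D_T$. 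The same procedure applied to the $m$-equation with weight $g$ and the completeness of $\{q_2^n\}$ delivers $v_1\equiv v_2$.

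The main obstacle will be the last step. The adjoint $\psi$ carries mixed Dirichlet data that is non-smooth across the corner set $\partial\Lambda\times(0,T)$, so one must first establish enough regularity of $\psi$ in a suitable weighted or trace sense to rigorously justify both the boundary integration-by-parts and the strong maximum/Harnack argument that locates $\{\psi>0\}$. A secondary but non-trivial technical point is the $s\to\infty$ limit in Step~1: this relies on the uniform upper--lower solution bounds provided by Lemma~\ref{lem1} and Theorem~\ref{forward}, which place all the $(u_{s,n}^{(j)},m_{s,n}^{(j)})$ in a common compact subset of $C^{2+\gamma,1+\gamma/2}(\overline{D_T})\times C^{2+\gamma,1+\gamma/2}(\overline{D_T})$ along which passage to the limit of the boundary flux functionals is justified.
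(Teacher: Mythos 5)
Your overall strategy (use $a_i^s\to 0$ to kill the nonlinearity, then completeness of $\{q_1^n\}$, then positivity of an adjoint solution) is the same as the paper's, but your order of operations creates a genuine gap at the first step. You pass to the limit $s\to\infty$ in the solutions themselves and in the boundary-flux functionals before writing any integral identity, and you justify this by claiming that the upper--lower solution bounds of Lemma~\ref{lem1} and Theorem~\ref{forward} place all $(u_{s,n}^{(j)},m_{s,n}^{(j)})$ in a common compact subset of $C^{2+\gamma,1+\gamma/2}(\overline{D_T})\times C^{2+\gamma,1+\gamma/2}(\overline{D_T})$. That is not what those results give: they yield only uniform $L^\infty$ bounds via constant upper solutions (which do remain bounded as $a^s\to 0$ precisely because $l,v<0$), while the admissible class $\mathcal{A}$ imposes no uniform $C^{\gamma,\gamma/2}$ control on $a_i^s$; hence there is no uniform Schauder estimate, no compactness in $C^{2+\gamma,1+\gamma/2}$, and in particular no convergence of $\partial_\nu u_{s,n}$ on $\Gamma_T$ comes for free. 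Identifying the limit as the decoupled linear system (``standard continuous dependence on the coefficients'') is likewise not available off the shelf here: one would have to estimate the difference between $u_{s,n}^{(j)}$ and the linear solution through the equation it satisfies (whose source is $a_1^s$ times uniformly bounded quantities), e.g.\ in $W^{2,1}_p$, and then invoke trace theorems to get convergence of the fluxes. This is repairable, but it is an additional layer of parabolic regularity theory that your proposal asserts rather than proves.

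The paper avoids exactly this difficulty by reversing the two steps: it multiplies the \emph{nonlinear} equation for $u_{s,n}$ by the adjoint solution $w(x,t;l_j)$ of \eqref{w1} and integrates, so the data identity \eqref{umtfg3} becomes the domain identity \eqref{njum}, in which every nonlinear term carries the explicit factor $a_1^s$; then only the uniform sup bound plus dominated convergence is needed to let $s\to\infty$, giving $\int_{D_T} q_1^n\,(w(\cdot;l_1)-w(\cdot;l_2))\,dxdt=0$, and completeness of $\{q_1^n\}$ yields $w(\cdot;l_1)=w(\cdot;l_2)$ and hence $(l_1-l_2)w(\cdot;l_1)=0$. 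Your later steps (forming $w_n=u_n^{(1)}-u_n^{(2)}$, the duality through $T_{l_2}$ and the injectivity of $T_{l_2}^{\ast}$, ending with $(l_1-l_2)\psi=0$) are a correct linear-theory variant of this endgame and would be fine once the limiting linear system is legitimately in hand. One further caution: your closing inference that positivity of $\psi$ ``on a sufficiently large open subset'' together with continuity of $l_1,l_2$ forces $l_1\equiv l_2$ in all of $D_T$ is not valid as stated --- if $h$ vanishes for $t$ near $T$, the adjoint solution vanishes identically on that time slab and the identity $(l_1-l_2)\psi=0$ gives no information there. The paper simply asserts $w(\cdot;l_1)>0$ in $D_T$ by the maximum principle (which tacitly requires $h$ to be active up to $t=T$), so the same caveat touches its proof; but it should be addressed by a condition on the support of $h$, not by an appeal to continuity, and the Harnack machinery you invoke is not needed beyond the strong maximum principle already used in the paper.
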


\begin{proof}
 Since the process of obtaining the uniqueness of $v(x,t)$ is similar to the one of $l(x,t)$, we only show the proof of $l(x,t)$. Set $(u_{s,n}(x,t;l,v), m_{s,n}(x,t;l,v))$ to be the solution of \eqref{utmt} corresponding to the input functions $a_1^s,a_2^s,q_1^n,q_2^n$.
Notice that $h$ is a nonnegative and nonzero function, we set $h_0=h$ on $\Gamma_T$ and $h_0=0$ on $S_T\backslash\Gamma_T$. Let $ w(x,t;l_1)$ be the solution of the following adjoint problem
\begin{eqnarray}
\label{w1}
\begin{cases}
\displaystyle-\frac{\partial w}{\partial t}-d_1\Delta w-\alpha\cdot \nabla w-l_1(x,t)w=0\ \ \hspace*{1.5cm} &\text{in}\ \ D_T,\medskip\\
\displaystyle w(x,t)=h_0\ \ \hspace*{6.0cm}  &\text{on}\ \ S_T,\medskip\\
\displaystyle w(x,T)=0\ \ \hspace*{6.0cm} &\text{in}\ \ \Omega.\\
\end{cases}
\end{eqnarray}
From the first equation of \eqref{utmt}, \eqref{w1} and the Green's formula, we compute
\allowdisplaybreaks
\begin{align*}
&\int_{0}^T \int_{\Omega}\bigg{(}-a_1^su_{s,n}^2-a_1^s\sum_{k=1}^Mu_{s,n}^km_{s,n}^{M-k+1}+q_1^n\bigg{)}w(x,t;l_1)dxdt\\
=&\int_{0}^T\int_{\Omega}\bigg{(}\frac{\partial u_{s,n}}{\partial t}- d_1\Delta
u_{s,n}-l_1(x,t)u_{s,n}+\alpha\cdot\nabla u_{s,n}\bigg{)}w(x,t;l_1)dxdt\\
=&\int_{0}^T\int_{\Omega}\bigg{(}-\frac{\partial w}{\partial t}-d_1\Delta w-l_1(x,t)w-\alpha\cdot\nabla w\bigg{)}u_{s,n}dxdt\\
&-\int_0^T\int_{\partial\Omega} \frac{\partial u_{s,n}(x,t;l_1,v_1)}{\partial \nu}w(x,t;l_1)dxdt\\
=&-\int_0^T\int_{\Lambda}\frac{\partial u_{s,n}(x,t;l_1,v_1)}{\partial \nu(x)}h(x,t)dxdt.
\end{align*}
Similarly, for $w(x,t;l_2)$, we find
\begin{eqnarray*}
\int_{0}^T \int_{\Omega}\bigg{(}-a_1^su_{s,n}^2-a_1^s\sum_{k=1}^Mu_{s,n}^km_{s,n}^{M-k+1}+q_1^n\bigg{)}w(x,t;l_2)dxdt=
-\int_0^T\int_{\Lambda}\frac{\partial u_{s,n}(x,t;l_2,v_2)}{\partial \nu(x)}hdxdt.
\end{eqnarray*}
From the condition given in \eqref{umtfg3}, we obtain  that
\begin{eqnarray}
\label{njum}
\begin{aligned}
&\int_{0}^T \int_{\Omega}\bigg{(}-a_1^{s}u_{s,n}^2-a_1^{s}\sum_{k=1}^Mu_{s,n}^km_{s,n}^{M-k+1}+q_1^n\bigg{)}w(x,t;l_1)dxdt\\
=&\int_{0}^T \int_{\Omega}\bigg{(}-a_1^{s}u_{s,n}^2-a_1^{s}\sum_{k=1}^Mu_{s,n}^km_{s,n}^{M-k+1}+q_1^n\bigg{)}w(x,t;l_2)dxdt.
\end{aligned}
\end{eqnarray}
Notice that $\{a_1^s\}_{s=1}^{\infty}\in\mathcal{A}$ and $\{q_1^n\}_{n=1}^{\infty}\in\mathcal{B}$ are uniformly bounded sequences, combing this fact with Theorem \ref{forward}, there exist a constant upper solution $(\tilde{u}_{s,n},\tilde{m}_{s,n})$ of the system \eqref{utmt} for any $s,n\in\mathds{N}$,  thus we have
\begin{align*}
\bigg{|}\bigg{(}-a_1^{s}u_{s,n}^2-a_1^{s}\sum_{k=1}^Mu_{s,n}^km_{s,n}^{M-k+1}+q_1^n\bigg{)}w(x,t;l_i)\bigg{|}
< C,\ \ i=1,2,
\end{align*}
where $C$ is a constant. Let $s\rightarrow\infty$ in \eqref{njum}, and then by the dominated convergence theorem, we derive
\begin{eqnarray*}
\label{jum}
\begin{aligned}
\int_{0}^T \int_{\Omega}q_1^n(x,t)w(x,t;l_1)dxdt
=\int_{0}^T \int_{\Omega}q_1^n(x,t)w(x,t;l_2)dxdt.
\end{aligned}
\end{eqnarray*}
 From the completeness of $\{q_1^n\}_{n=1}^{\infty}$, we deduce
\begin{align}\label{w1=w2}
w(x,t;l_1)=w(x,t;l_2)\ \mathrm{for} \ (x,t)\in D_T.
\end{align}
The equations of \eqref{w1} corresponding to $l_1(x,t), l_2(x,t)$, respectively, followed by subtraction, and using \eqref{w1=w2}, we find
\begin{eqnarray*}
\label{lw}
(l_1-l_2) w(x,t;l_1)=0\ \mathrm{for}\ (x,t)\in D_T.
\end{eqnarray*}
 By the maximum principle, we have $w(x,t;l_1)> 0$ in $D_T$. Thus $l_1=l_2$ in $D_T$. The proof is complete.
\end{proof}

\begin{rem}
The uniqueness result for the inverse problem in Theorem \ref{thm1-1} can also be extended to the coupled parabolic system with a general elliptic operator as follows:
\begin{align*}
\mathcal{L}_1u=-\sum_{i,j=1}^{N}\left(c^{ij}u_{x_{i}}\right)_{x_{j}}+\sum_{i=1}^{N}d^{i}u_{x_i},
\end{align*}
where the coefficients $c^{ij}$ and $d^{i}$ are given. The unique result for determining the unknown coefficients can be obtained under the same conditions of Theorem \ref{thm1-1}. The crucial issue in the proof is the existence of a maximum principle for the conjugate equation. We refer to \cite{E2009} for the maximum principle of a parabolic equation with the general elliptic operator $\mathcal{L}_1$.
\end{rem}

\subsection{Uniqueness results for inverse problems associated with the linear coupled parabolic system}
In this subsection, we study the inverse problem of the linear coupled system  \eqref{utmt} with the RHS term $(F_1, F_2)$ given by \eqref{linf}. In this system, we consider  identifying $l(x)$ and $v(x)$ that depend only on the spatial variable $x$. Compared to determining the coefficients  $l(x,t)$ and $v(x,t)$, the amount of average flux data required for this inverse problem can be greatly reduced. We first introduce the following coupled system
\begin{align}
\label{utm}
\begin{cases}
\displaystyle\frac{\partial u}{\partial t}-d_1\Delta u+\alpha\cdot\nabla u=l(x)u+b_1(x,t)m+q_1(x,t)
\hspace*{0.5cm} \ &\mathrm{in}\ D_T\medskip\\
\displaystyle \frac{\partial m}{\partial t}-d_2\Delta m+\beta\cdot\nabla m=v(x)m+b_2(x,t)u+q_2(x,t)
\hspace*{0.2cm} &\mathrm{in}\ D_T,\medskip\\
\displaystyle  u=m=0 \hspace*{7.8cm} &\mathrm{on}\ S_T,\medskip\\
\displaystyle  u(x,0)=0, m(x,0)=0\ \hspace*{5.6cm} &\mathrm{in}\ \Omega,\medskip\\
\displaystyle  u>0,\  m>0\ \hspace*{7.2cm} &\mathrm{in}\ D_T.
\end{cases}
\end{align}
The next theorem states the corresponding unique result.
\begin{thm}\label{thm2}
Let $\mathcal{F}^+_{l_i,v_i}$ and $\mathcal{G}^+_{l_i,v_i}$ be the measurement maps associated to \eqref{utm} for $i=1,2$, the weight functions $h,g\in C_0^{\gamma,\gamma/2}(\Gamma_T)$ be nonnegative and nonzero, and $V(t)\in C^1(0,T]$ is positive and $V(0)=0$.  Assume that $v,l\in C^{\gamma}(\overline{\Omega})$ are negative functions, and $V_1=V(t), V_2=V'(t)$. If
\begin{equation}
\label{fcd}
\begin{aligned}
{\mathcal{F}}_{l_1,v_1}^+(b_1^s,b_2^s, q_1^{j,n},q_2^{j,n})&={\mathcal{F}}_{l_2,v_2}^+(b_1^s,b_2^s, q_1^{j,n},q_2^{j,n}),\\
{\mathcal{G}}_{l_1,v_1}^+(b_1^s,b_2^s, q_1^{j,n},q_2^{j,n})&={\mathcal{G}}_{l_2,v_2}^+(b_1^s,b_2^s, q_1^{j,n},q_2^{j,n}),
\end{aligned}
\end{equation}
where $b_i^s\in\mathcal{A}$, $q_i^{j,n}=\phi^n(x) V_j(t)$ with $\phi^n(x)\in \mathcal{B}$, $i,j=1,2$, $s,n\in \mathds{N}$, then we have
$$l_1(x)=l_2(x), v_1(x)=v_2(x)\ in \ \Omega.$$

\end{thm}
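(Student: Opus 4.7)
The plan is to adapt the adjoint-plus-completeness strategy used for Theorem~\ref{thm1-1}, with two new ingredients tailored to the linear coupling and to the fact that $l$ and $v$ depend only on $x$. I focus on recovering $l(x)$; the argument for $v(x)$ is symmetric. The starting point is the adjoint problem
\begin{equation*}
\begin{cases}
-\partial_t w - d_1\Delta w - \alpha\cdot\nabla w - l_i(x)\,w = 0 & \text{in } D_T,\\
w = h_0 & \text{on } S_T,\\
w(x,T) = 0 & \text{in } \Omega,
\end{cases}
\end{equation*}
with $h_0=h$ on $\Gamma_T$ and $h_0=0$ on $S_T\setminus\Gamma_T$; write $w_i(x,t)$ for its solution corresponding to $l_i$.

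Denote by $(u^i_{s,j,n}, m^i_{s,j,n})$ the solution of~\eqref{utm} driven by $(b_1^s, b_2^s, q_1^{j,n}, q_2^{j,n})$ for the pair $(l_i,v_i)$. Pairing the first equation of~\eqref{utm} with $w_i$ and applying Green's formula yields
\begin{equation*}
\int_0^T\!\!\int_\Omega \bigl(b_1^s m^i_{s,j,n} + q_1^{j,n}\bigr)\,w_i\,dx\,dt = -\int_0^T\!\!\int_\Lambda \frac{\partial u^i_{s,j,n}}{\partial \nu}\,h\,dx\,dt.
\end{equation*}
The measurement match~\eqref{fcd} equates the right-hand sides for $i=1,2$, hence also the left-hand sides. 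The uniform $L^\infty$ bound on $m^i_{s,j,n}$ supplied by Theorem~\ref{thm-lin}, together with $b_1^s\to 0$, permits dominated convergence as $s\to\infty$ to eliminate the coupling term, leaving
\begin{equation*}
\int_0^T\!\!\int_\Omega q_1^{j,n}(x,t)\,(w_1-w_2)(x,t)\,dx\,dt = 0 \qquad \text{for all } n\in\mathds{N},\ j=1,2.
\end{equation*}

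Inserting $q_1^{j,n}=\phi^n(x)V_j(t)$ and using the $L^2(\Omega)$-completeness of $\{\phi^n\}$ reduces this to $\int_0^T V_j(t)(w_1-w_2)(x,t)\,dt = 0$ for a.e.\ $x\in\Omega$ and $j=1,2$. Set $W(x):=\int_0^T V(t)(w_1-w_2)\,dt$: the $j=1$ identity is $W\equiv 0$, and for $j=2$, integration by parts in $t$, using $V(0)=0$ and the terminal condition $w_i(x,T)=0$, converts the vanishing of $\int_0^T V'(w_1-w_2)\,dt$ into $\int_0^T V(t)\,\partial_t(w_1-w_2)\,dt=0$. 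Substituting the adjoint PDEs for $w_1$ and $w_2$ and using that $l_i$ depends only on $x$ so that it commutes with the $t$-integration, the whole identity collapses to
\begin{equation*}
(l_1(x)-l_2(x))\int_0^T V(t)\,w_2(x,t)\,dt = -d_1\Delta W - \alpha\cdot\nabla W - l_1(x)\,W = 0.
\end{equation*}
The strong maximum principle applied to the time-reversed adjoint problem, whose lateral datum $h_0$ is nonnegative and nontrivial, gives $w_2>0$ in $D_T$; combined with $V>0$ on $(0,T]$ this makes $\int_0^T V w_2\,dt$ strictly positive pointwise and forces $l_1\equiv l_2$.

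The step I expect to require the most care is the commutation of $\Delta$ and $\nabla$ with the $t$-integration defining $W$: one needs $w_i$ classically regular up to the lateral boundary so that $\Delta W=\nabla W=0$ follows from $W\equiv 0$, which rests on standard parabolic Schauder estimates applied to the adjoint problem with H\"older-regular data $h_0$. The other technical points, namely the uniform bound on $m^i_{s,j,n}$ as $b_i^s\to 0$ and the boundary terms in the time integration by parts, are routine once the adjoint framework is in place. Conceptually, the only new feature relative to Theorem~\ref{thm1-1} is the substitution of a pair of temporal profiles $(V,V')$ for a full $L^2(\overline{D_T})$-complete family of sources; this ``one profile plus its derivative'' is precisely the piece of information that matches the reduced unknown, namely a function of $x$ alone rather than of $(x,t)$.
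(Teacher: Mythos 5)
Your proposal is correct and follows essentially the same route as the paper: the same adjoint problem with lateral datum $h_0$, Green's formula to convert the averaged flux data into interior identities, the limit $s\to\infty$ using the uniform bounds from the forward well-posedness to remove the coupling term, $L^2(\Omega)$-completeness of $\{\phi^n\}$, the $(V,V')$ pair with integration by parts in $t$ (using $V(0)=0$ and $w(x,T)=0$), and the maximum-principle positivity of the adjoint solution to conclude $l_1=l_2$. The only cosmetic differences are that you package the time-integrated difference as $W(x)$ and conclude via $w_2>0$ rather than $w_1>0$, which changes nothing of substance.
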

\begin{proof} We show the uniqueness of $l(x)$ and the uniqueness of $v(x)$ can be derived similarly. Let $(u_{j,s,n}(x,t;l,v), m_{j,s,n}(x,t;l,v))$ be the solution of \eqref{utm} corresponding to the input functions $b_i^s,q_i^{j,n}$. We set $h_0=h$ on $\Gamma_T$ and $h_0=0$ on $S_T\backslash\Gamma_T$. The $ w(x,t;l)$ is the solution of the following adjoint problem
\begin{eqnarray}
\label{mix-eq-weak1}
\begin{cases}
\begin{aligned}
\displaystyle-\frac{\partial w}{\partial t}-\Delta w-l(x)w-\alpha\cdot \nabla w=0
\hspace*{0.8cm}\ &\text{in}\ \ D_T,\medskip\\
\displaystyle w=h_0\ \hspace*{5.3cm} &\text{on}\ \ S_T,\medskip\\
\displaystyle w(x,T)=0\ \hspace*{4.5cm} &\text{in}\ \ \Omega.\\
\end{aligned}
\end{cases}
\end{eqnarray}
when $j=1$, both sides of the first equation for coupled system \eqref{utm} are multiplied by $w(x,t;l_1)$ and integrated over $D_T$. According to $(\ref{mix-eq-weak1})$ and  Green's formula, we obtain that
\begin{align*}
\int_{0}^T \int_{\Omega}(b_1^sm_{1,s,n}(x,t;l_1)+\phi^{n}(x)V(t))w(x,t;l_1)dxdt
=-\int_0^T\int_{\Lambda}\frac{\partial u_{1,s,n}(x,t;l_1)}{\partial \nu}hdxdt.
\end{align*}
 For $j=2$, we derive
\begin{eqnarray*}
\begin{aligned}
\int_{0}^T\int_{\Omega} (b_1^sm_{2,s,n}(x,t;l_1)+\phi^{n}(x)V'(t))w(x,t;l_1)dxdt=-\int_0^T\int_{\Lambda}\frac{\partial u_{2,s,n}(x,t;l_1)}{\partial \nu}hdxdt.
\end{aligned}
\end{eqnarray*}
Similarly,  multiplying both sides of the first equation for coupled system \eqref{utm} by $w(x,t; l_2)$ for $j=1,2$,  we can see that
\begin{eqnarray*}
\begin{aligned}
\int_{0}^T\int_{\Omega} (b_1^sm_{1,s,n}(x,t;l_2)+\phi^{n}(x)V(t))w(x,t;l_2)dxdt&=-\int_0^T\int_{\Lambda}\frac{\partial u_{1,s,n}(x,t;l_2)}{\partial \nu}hdxdt,\\
\int_{0}^T \int_{\Omega} (b_1^sm_{2,s,n}(x,t;l_2)+\phi^{n}(x)V'(t))w(x,t;l_2)dxdt&=-\int_0^T\int_{\Lambda}\frac{\partial u_{2,s,n}(x,t;l_2)}{\partial \nu}hdxdt.
\end{aligned}
\end{eqnarray*}
From the condition given by \eqref{fcd}, we have
\begin{eqnarray}
\label{vnmu}
\begin{aligned}
&\int_{0}^T\int_{\Omega} (b_1^sm_{j,s,n}(x,t;l_1)+\phi^{n}(x)V_j(t))w(x,t;l_1)dxdt\\
=&\int_{0}^T \int_{\Omega} (b_1^sm_{j,s,n}(x,t;l_2)+\phi^n(x)V_j(t))w(x,t;l_2)dxdt.\\
\end{aligned}
\end{eqnarray}
Notice that $\{b_1^s\}_{s=1}^{\infty}\in\mathcal{A}$ and $\{\phi^n\}_{n=1}^{\infty}\in \mathcal{B}$ are uniformly bounded sequences, as $s\rightarrow\infty$ in \eqref{vnmu}, we find
\begin{align*}
\int_{0}^T\int_{\Omega} \phi^{n}(x)V(t)w(x,t;l_1)dxdt=&\int_{0}^T \int_{\Omega} \phi^{n}(x)V(t)w(x,t;l_2)dxdt,\\
\int_{0}^T\int_{\Omega} \phi^{n}(x)V'(t)w(x,t;l_1)dxdt=&\int_{0}^T\int_{\Omega} \phi^{n}(x)V'(t)w(x,t;l_2)dxdt.
\end{align*}
By the completeness of $\{\phi^n(x)\}_{n=1}^{\infty}$, we obtain that
\begin{eqnarray}
\label{eq3}
\begin{aligned}
\int_{0}^T V(t)w(x,t;l_1)dt=&\int_{0}^T V(t)w(x,t;l_2)dt,\\
\int_{0}^T V'(t)w(x,t;l_1)dt=&\int_{0}^T V'(t)w(x,t;l_2)dt.\\
\end{aligned}
\end{eqnarray}
Multiplying equation (\ref{mix-eq-weak1}) by $V(t)$ and integrating by parts over $(0,T)$, we have
\begin{eqnarray}\small
\label{eq2}
\begin{aligned}
\int_{0}^T-w_t(x,t;l_1)Vdt-\int_0^T\Delta w(x,t;l_1) Vdt-\int_0^T l_1(x)w(x,t;l_1)Vdt-\int_0^T\alpha\cdot \nabla w(x,t;l_1)Vdt=0,\\
\int_{0}^T- w_t(x,t;l_2)Vdt-\int_0^T \Delta w(x,t;l_2)Vdt-\int_0^T l_2(x)w(x,t;l_2)Vdt-\int_0^T\alpha\cdot \nabla w(x,t;l_2)Vdt=0.
\end{aligned}
\end{eqnarray}
Subtraction of two equations in \eqref{eq2} and combine with \eqref{eq3}, it follows that
\begin{eqnarray*}
(l_1-l_2)\int_0^T w(x,t;l_1)V(t)dt=0.
\end{eqnarray*}
By the maximum principle (see \cite{E2009}) for equation \eqref{mix-eq-weak1}, we deduce that $w(x,t;l_1)>0$, then $l_1=l_2$ in $\Omega$. The proof is complete.
\end{proof}

\begin{rem}\label{rem4.2}
For the nonlinear parabolic coupled system \eqref{utmt}, if the unknown coefficients depend only on the spatial variable $x$, a unique identification result can be obtained under the same conditions as the Theorem \ref{thm2}. It's important to note  that in contrast to identifying $l(x,t)$ and $v(x,t)$, only the space segments of injected sources need to form a complete set of $L^2(\overline{\Omega})$, and the time segments only need two modulations. Thus, the measurement data can be substantially reduced.
\end{rem}

\subsection{Uniqueness results for inverse problems associated with the elliptic coupled system}\label{sect:5}
In this subsection, we present the unique identifiable theorem for the coefficients $l(x), v(x)$ of the coupled elliptic system \eqref{umell}.

\begin{thm}
Let $\mathcal{F}^+_{l_i,v_i}$ and $\mathcal{G}^+_{l_i,v_i}$ be the measurement maps associated to \eqref{umell} for $i=1,2$, and the weight functions $h,g\in C_0^{\gamma}(\Lambda)$ be nonnegative and nonzero. Assume that $v,l\in C^{\gamma}(\overline{\Omega})$ are negative functions. If
\begin{equation}\label{g-ste1}
\begin{aligned}
\mathcal{F}_{l_1,v_1}^+(b_1^s,b_2^s,q_1^n,q_2^n)&=\mathcal{F}_{l_2,v_2}^+(b_1^s,b_2^s,q_1^n,q_2^n),\\
\mathcal{G}_{l_1,v_1}^+(b_1^s,b_2^s,q_1^n,q_2^n)&=\mathcal{G}_{l_2,v_2}^+(b_1^s,b_2^s,q_1^n,q_2^n)
\end{aligned}
\end{equation}
for $b_i^s(x)\in \mathcal{A}$, $q_i^n(x)\in \mathcal{B}$ with $s, n\in\mathds{N}$, then it holds that
$$l_1(x)=l_2(x), v_1(x)=v_2(x)\  in\  \Omega.$$
\end{thm}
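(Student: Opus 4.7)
The plan is to mirror the strategy of Theorem~\ref{thm1-1} in the elliptic setting, trading the parabolic adjoint problem for a time-independent one. Because the operator $-d_1\Delta - l(x)$ in \eqref{umell} has no first-order drift and the zeroth-order coefficient $-l(x)$ is strictly positive (since $l<0$), the adjoint problem is self-adjoint and admits a strong maximum principle, which is exactly what the final step will require. By symmetry I only detail the uniqueness of $l(x)$; the argument for $v(x)$ is identical after swapping the first and second equations and using the data $\mathcal{G}^+$ with weight $g$.

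First I would set $h_0 = h$ on $\Lambda$ and $h_0 = 0$ on $\partial\Omega\setminus\Lambda$, and define $w(\,\cdot\,;l_i)$ as the solution of the adjoint elliptic boundary value problem
\begin{equation*}
-d_1\Delta w - l_i(x) w = 0 \ \ \text{in }\Omega,\qquad w = h_0 \ \ \text{on }\partial\Omega.
\end{equation*}
Let $(u_{s,n}^{(i)},m_{s,n}^{(i)})$ denote the unique positive solution of \eqref{umell} associated with $(l_i,v_i)$ and inputs $(b_1^s,b_2^s,q_1^n,q_2^n)$, whose existence and the uniform a priori bound $0<u_{s,n}^{(i)}\le w_1^*$, $0<m_{s,n}^{(i)}\le w_2^*$ (with $w_1^*,w_2^*$ depending only on $l_i,v_i,q_1^n,q_2^n$ and \emph{not} on $s$) are furnished by the well-posedness result of Section~\ref{sect:3}. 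Multiplying the first equation of \eqref{umell} by $w(\,\cdot\,;l_i)$ and integrating by parts, the boundary term collapses onto $\Lambda$ because $u_{s,n}^{(i)}$ vanishes on $\partial\Omega$ and $h_0$ is supported there, yielding
\begin{equation*}
-d_1\int_{\Lambda}\frac{\partial u_{s,n}^{(i)}}{\partial\nu}\,h\,dS
= \int_{\Omega}\Bigl(-b_1^{s}\sum_{k=1}^{M}(u_{s,n}^{(i)})^{k}(m_{s,n}^{(i)})^{M-k+1}+q_1^{n}\Bigr) w(x;l_i)\,dx.
\end{equation*}

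Invoking \eqref{g-ste1}, the left-hand sides for $i=1$ and $i=2$ coincide, so the right-hand sides must match as well. Since $b_1^s\to 0$ and the $s$-independent bound $|u_{s,n}^{(i)}|+|m_{s,n}^{(i)}|\le C_n$ holds for each fixed $n$, the dominated convergence theorem lets me send $s\to\infty$ and discard the nonlinear sum, leaving
\begin{equation*}
\int_{\Omega} q_1^{n}(x)\,w(x;l_1)\,dx = \int_{\Omega} q_1^{n}(x)\,w(x;l_2)\,dx, \qquad n\in\mathbb{N}.
\end{equation*}
By completeness of $\{q_1^n\}_{n=1}^\infty$ in $L^2(\overline{\Omega})$ (admissibility class $\mathcal{B}$), I deduce $w(x;l_1)=w(x;l_2)$ pointwise in $\Omega$. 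Subtracting the two adjoint equations then gives $(l_1-l_2)\,w(x;l_1)=0$ in $\Omega$, so the conclusion $l_1\equiv l_2$ reduces to showing $w(x;l_1)>0$ in $\Omega$.

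The point I expect to be the main technical step is this positivity. Because $-l_1>0$, the operator $-d_1\Delta - l_1$ satisfies the hypotheses of the strong maximum principle for elliptic operators with non-negative zeroth-order coefficient: any interior non-positive minimum forces $w$ to be constant. Since $w=h_0\ge 0$ on $\partial\Omega$ with $h_0\not\equiv 0$ (as $h$ is non-negative and non-trivial on $\Lambda$), we first conclude $w\ge 0$ in $\overline{\Omega}$, and then by the strong maximum principle $w>0$ in $\Omega$. This yields $l_1(x)=l_2(x)$ in $\Omega$. Running the same program with the second equation of \eqref{umell}, the adjoint problem $-d_2\Delta \tilde w - v_i \tilde w = 0$ with boundary data built from $g$, and the data $\mathcal{G}^+_{l_i,v_i}$, produces $v_1(x)=v_2(x)$ in $\Omega$, completing the proof.
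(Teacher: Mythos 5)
Your proposal is correct and follows essentially the same route as the paper's proof: the same adjoint problem $-d_1\Delta w - l_i w = 0$ with boundary data $h_0$, Green's identity to convert the averaged flux data into interior integrals, the limit $s\to\infty$ using the $s$-independent upper-solution bound to remove the nonlinear term, completeness of $\{q_1^n\}$ to get $w(\cdot;l_1)=w(\cdot;l_2)$, and subtraction plus positivity of $w$ (via the maximum principle, which the paper invokes more tersely) to conclude $l_1=l_2$, with the symmetric argument for $v$.
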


\begin{proof}
 We only show the uniqueness of $l(x)$, the uniqueness of $v(x)$ can be derived similarly. Let $(u_{s,n}(x;l,v), m_{s,n}(x;l,v))$ be the solution of \eqref{umell} corresponding to the input functions $b_i^s,q_i^{n}$. Set $h_0=h$ on $\Lambda$ and $h_0=0$ on $\partial\Omega\backslash\Lambda$, and $w(x;l)$ is the solution to the following adjoint problem
\begin{eqnarray}
\label{W}
\begin{cases}
\displaystyle -d_1\Delta w-l(x)w=0\hspace*{2.0cm} \ &\text{in}\ \ \Omega,\medskip\\
\displaystyle w=h_0\hspace*{3.2cm} \ &\text{on}\ \ \partial\Omega.
\end{cases}
\end{eqnarray}
Both sides of the first equation of the system \eqref{um} are multiplied by $w(x; l_1), w(x; l_2)$, respectively, by using the Green's formula and combine with \eqref{g-ste1}, we compute
\begin{eqnarray}
\label{umell12}
\begin{aligned}
&\int_{\Omega}\bigg{(}-b_1^s\sum_{k=1}^Mu_{s,n}^km_{s,n}^{M-k+1}+q_1^n(x)\bigg{)}w(x;l_1)dx\\
=&\int_{\Omega}\bigg{(}-b_1^s\sum_{k=1}^Mu_{s,n}^km_{s,n}^{M-k+1}+q_1^n(x)\bigg{)}w(x;l_2)dx.
\end{aligned}
\end{eqnarray}
 Due to $\{b_1^s\}_{s=1}^{\infty}\in \mathcal{A}$ and $\{q_1^n\}_{n=1}^{\infty}\in\mathcal{B}$ are uniformly bounded sequences, let $s\rightarrow\infty$ in \eqref{umell12}, it follows from the completeness of $\{q_1^n(x,t)\}_{j=1}^{\infty}$ that
$$w(x;l_1)=w(x;l_2) \ \mathrm{for}\  x\in\Omega.$$
The equations in \eqref{W} corresponding to $l_1, l_2$, respectively, followed by subtraction, we find
\begin{eqnarray*}
(l_1-l_2)w(x;l_1)=0\ \mathrm{for}\ x\in \Omega.
\end{eqnarray*}
 Since $w(x;l_1)>0$ for $x\in \Omega$, then $l_1=l_2$ in $\Omega$. The proof is complete.
\end{proof}

\section{Inverse problems associated with the coupled parabolic system with Neumann boundary condition}\label{sect:5}
In this section, we study an inverse problem associated with the coupled system \eqref{utmt} with a homogeneous Neumann boundary, i.e.,
 \[\frac{\partial u}{\partial\nu}=\frac{\partial m}{\partial \nu}=0\ \ \mathrm{on}\  S_T.\]
The homogeneous Neumann boundary condition usually indicates that this system is self-contained with zero population flux across the boundary. Under this boundary condition, we utilize the average flux data on a partial boundary to uniquely identify/determine the coefficients $l(x,t)$ and $v(x,t)$ in the coupled system as follows
 \begin{align}
 \small
\label{utmtN}
\begin{cases}
\displaystyle\frac{\partial u}{\partial t}-d_1\Delta u+\alpha\cdot\nabla u=l(x,t)u+a_1\bigg{(}-u^2-\sum_{k=1}^Mu^km^{M-k+1}\bigg{)}+q_1
\ \ \hspace*{1.0cm} &\mathrm{in}\  D_T,\medskip\\
\displaystyle \frac{\partial m}{\partial t}-d_2\Delta m+\beta\cdot\nabla m=v(x,t)m+a_2\bigg{(}-m^2-\sum_{k=1}^Mu^km^{M-k+1}\bigg{)}+q_2
\hspace*{0.65cm} &\mathrm{in}\ D_T,\medskip\\
\displaystyle  \frac{\partial u}{\partial\nu}=\frac{\partial m}{\partial\nu}=0\hspace*{10.1cm} &\mathrm{on}\  S_T,\medskip\\
u(x,0)=0,\  m(x,0)=0\ \hspace*{8.35cm} &\mathrm{in}\ \Omega,\medskip\\
\displaystyle u>0,\  m>0\ \hspace*{8.4cm} &\mathrm{in}\ D_T.
\end{cases}
\end{align}
We would like to emphasize that the measurement maps associated with a coupled system \eqref{utmtN} are different from the measurement maps for the coupled system with the Dirichlet boundary,  which we denote as  follows:
\begin{equation*}
\begin{aligned}
\mathcal{F}^{\mathcal{N+}}_{l,v}(a_1,a_2,q_1,q_2):=\int_{\Gamma_T}u(x,t)h(x,t)dxdt,\\
\mathcal{G}^{\mathcal{N+}}_{l,v}(a_1,a_2,q_1,q_2):=\int_{\Gamma_T} m(x,t)g(x,t)dxdt,
\end{aligned}
\end{equation*}
where $h,g$ are still the weight functions. Next, we show the uniqueness theorem associated with the system \eqref{utmtN}.

\begin{thm}Let $\mathcal{F}^{\mathcal{N+}}_{l_i,v_i}$ and $\mathcal{G}^{\mathcal{N+}}_{l_i,v_i}$ be the measurement maps associated to \eqref{utmtN} for $i=1,2$, and the weight functions $h,g\in C_0^{\gamma,\gamma/2}(\Gamma_T)$ be nonnegative and nonzero. Assume that $v, l\in C^{\gamma,\gamma/2}(\overline{D_T})$ are negative functions. If
\begin{equation}\label{umfg1}
\begin{aligned}
\mathcal{F}^{\mathcal{N+}}_{l_1,v_1}(a_1^s,a_2^s,q_1^n,q_2^n)&=\mathcal{F}^{\mathcal{N+}}
_{l_2,v_2}(a_1^s,a_2^s,q_1^n,q_2^n),\\
\mathcal{G}^{\mathcal{N+}}_{l_1,v_1}( a_1^s,a_2^s,q_1^n,q_2^n)&=\mathcal{G}^{\mathcal{N+}}_{l_2,v_2}(a_1^s,a_2^s,q_1^n,q_2^n)
\end{aligned}
\end{equation}
for $a_i^s\in\mathcal{A}$, and $q_i^n\in \mathcal{B}$ with  $s, n\in\mathds{N}$, then it holds that
 $$l_1(x,t)=l_2(x,t), v_1(x,t)=v_2(x,t)\  in\  D_T.$$
\end{thm}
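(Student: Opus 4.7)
The plan is to adapt the duality argument of Theorem \ref{thm1-1} to the Neumann setting. The structural change is in the choice of adjoint problem: since the measurement is now the weighted boundary value of $u$ itself rather than its flux, the adjoint must carry an inhomogeneous oblique-derivative boundary condition that, after integration by parts, produces the weight $h$ on $\Gamma_T$. Concretely, for each candidate coefficient $l$ I would introduce the backward adjoint problem
\begin{equation*}
\begin{cases}
-\partial_t w - d_1 \Delta w - \alpha\cdot\nabla w - l(x,t)\, w = 0 & \text{in } D_T, \\
d_1 \dfrac{\partial w}{\partial \nu} + (\alpha\cdot\nu)\, w = h_0 & \text{on } S_T, \\
w(x,T) = 0 & \text{in } \Omega,
\end{cases}
\end{equation*}
where $h_0 = h$ on $\Gamma_T$ and $h_0 = 0$ on $S_T\setminus\Gamma_T$. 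Multiplying the first equation of \eqref{utmtN} by $w(\,\cdot\,;l_i)$ and applying Green's formula, the Neumann condition $\partial u/\partial \nu = 0$ annihilates the $w\,\partial u/\partial \nu$ contribution, while the remaining boundary integral $\int_{S_T} u\bigl(d_1\partial_\nu w + (\alpha\cdot\nu)w\bigr)\,dS\,dt$ collapses to $\int_{\Gamma_T} u\, h\,dS\,dt = \mathcal{F}^{\mathcal{N}+}_{l_i,v_i}(a_1^s,a_2^s,q_1^n,q_2^n)$.

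This produces the key identity
\begin{equation*}
\int_{0}^{T}\!\!\int_{\Omega}\Bigl(-a_1^{s}u_{s,n}^{2}-a_1^{s}\sum_{k=1}^{M}u_{s,n}^{k}m_{s,n}^{M-k+1}+q_1^{n}\Bigr)\, w(x,t;l_i)\,dx\,dt \;=\; \mathcal{F}^{\mathcal{N}+}_{l_i,v_i}(a_1^s,a_2^s,q_1^n,q_2^n),
\end{equation*}
for $i=1,2$, and the assumption \eqref{umfg1} forces the two left-hand sides to coincide. Following Theorem \ref{thm1-1} essentially verbatim, I would then let $s\to\infty$ and invoke the dominated convergence theorem, justified by the uniform upper solution supplied by Theorem \ref{forward}, to eliminate the $a_1^{s}$-terms, leaving
\begin{equation*}
\int_{0}^{T}\!\!\int_{\Omega} q_1^{n}(x,t)\bigl(w(x,t;l_1)-w(x,t;l_2)\bigr)\,dx\,dt = 0 \quad \text{for every } n.
\end{equation*}
The completeness of $\{q_1^{n}\}$ in $L^{2}(\overline{D_T})$ then yields $w(\,\cdot\,;l_1)\equiv w(\,\cdot\,;l_2)$ in $D_T$, and subtracting the two adjoint equations gives $(l_1-l_2)\,w(\,\cdot\,;l_1)=0$ in $D_T$.

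The step I anticipate to be the main obstacle, and indeed the only genuine departure from the Dirichlet proof, is establishing the strict positivity of $w(\,\cdot\,;l_1)$ in $D_T$. In the Dirichlet version positivity was immediate from the parabolic maximum principle applied to a problem with nonnegative, nonzero Dirichlet data. Here the adjoint carries an oblique (Robin-type) boundary condition driven by the nonnegative, nonzero weight $h_0$, a positive absorption coefficient $-l_1 > 0$, and is posed backward in time. After the time-reversal $\widetilde{w}(x,s) = w(x,T-s)$, I would invoke the weak maximum principle for parabolic equations with oblique boundary data (see, e.g., \cite{E2009}) to obtain $w \geq 0$, and then apply the strong maximum principle together with Hopf's lemma, using $h_0 \not\equiv 0$, to upgrade to $w > 0$ throughout $D_T$. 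With this positivity in hand, $l_1 \equiv l_2$ in $D_T$ follows at once. The uniqueness of $v$ is then obtained by the entirely symmetric argument applied to the second equation of \eqref{utmtN} with weight $g$ and the adjoint variable associated to $v$.
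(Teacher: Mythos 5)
Your proposal is correct and follows the same duality--completeness--positivity pipeline as the paper, but it differs from the paper's proof in one substantive point: the choice of adjoint boundary condition. The paper's adjoint \eqref{w_n} imposes the pure Neumann condition $\partial w/\partial\nu=h_0$, and then asserts that Green's formula yields $\int_{\Gamma_T}u\,h$ on the right-hand side. In fact, with $\partial u/\partial\nu=0$ on $S_T$ and $u\not\equiv 0$, $w\not\equiv 0$ on the boundary, the integration by parts produces
\begin{equation*}
\int_{S_T} u\bigl(d_1\,\partial_\nu w+(\alpha\cdot\nu)\,w\bigr)\,dS\,dt ,
\end{equation*}
so the paper's identity silently drops the convection contribution $\int_{S_T}(\alpha\cdot\nu)uw$ (and a harmless factor $d_1$), which only disappears when $\alpha\cdot\nu=0$. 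Your oblique (Robin-type) adjoint condition $d_1\partial_\nu w+(\alpha\cdot\nu)w=h_0$ is precisely the choice that makes the boundary term collapse to the measured quantity $\int_{\Gamma_T}u\,h$, so on this point your argument is the more careful one. The price, as you anticipate, is the positivity step: since $\alpha\cdot\nu$ changes sign on $\partial\Omega$, the bare ``Hopf at a negative boundary minimum'' argument does not immediately give a contradiction at points where $\alpha\cdot\nu<0$ (there $(\alpha\cdot\nu)w$ is nonnegative at a negative minimum and may offset $d_1\partial_\nu w<0$), and Evans is not really a reference for oblique-derivative maximum principles. This is fixable by a standard multiplier trick: write $w=\psi\,\zeta$ with $\psi>0$ on $\overline\Omega$ satisfying $\partial_\nu\psi\geq(|\alpha|/d_1)\psi$ on $\partial\Omega$ (e.g.\ $\psi=e^{\lambda\phi}$ with $\phi$ a smooth function whose outward normal derivative is $\geq 1$ on $\partial\Omega$ and $\lambda$ large), so that $\zeta$ satisfies a parabolic equation of the same type with boundary operator $d_1\psi\,\partial_\nu\zeta+\bigl(d_1\partial_\nu\psi+(\alpha\cdot\nu)\psi\bigr)\zeta=h_0$ whose zeroth-order boundary coefficient is nonnegative; the standard positivity lemma (cf.\ \cite{P2012}) then gives $\zeta\geq 0$, and the strong maximum principle with $h_0\not\equiv0$ upgrades this to $w>0$ in $D_T$, after which your conclusion $l_1=l_2$ (and symmetrically $v_1=v_2$) goes through exactly as in Theorem \ref{thm1-1}.
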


\begin{proof}
Since the process of obtaining the uniqueness of $v(x,t)$ is similar to the one of $l(x,t)$, we only show the uniqueness of $l(x,t)$. Let $(u_{s,n}(x;l,v), m_{s,n}(x;l,v))$ be the solution of \eqref{utmtN} corresponding to the input functions $a_i^s,q_i^{n}$. We set $h_0=h$ on $\Gamma_T$ and $h_0=0$ on $S_T\backslash\Gamma_T$, and the function $ w(x,t;l_1)$ as the solution of the following adjoint problem
\begin{eqnarray}
\label{w_n}
\begin{aligned}
\begin{cases}
-\displaystyle\frac{\partial w}{\partial t}-d_1\Delta w-\alpha\cdot \nabla w-l_1(x,t)w=0\ \hspace*{0.5cm} &\text{in}\ \ D_T,\medskip\\
\displaystyle \frac{\partial w}{\partial \nu}=h_0\ \hspace*{5.7cm} &\text{on}\ \ S_T,\medskip\\
\displaystyle w(x,T)=0\ \hspace*{5.3cm} &\text{in}\ \ \Omega.\\
\end{cases}
\end{aligned}
\end{eqnarray}
Multiplying both sides of the first equation in the system $(\ref{utmtN})$ by $w(x,t;l_1), w(x,t;l_2)$ respectively, integrating over $D_T$ and using Green's formula, we compute
\begin{eqnarray*}\small
\int_{0}^T \int_{\Omega}\bigg{(}-a_1^su_{s,n}^2-a_1^s\sum_{k=1}^Mu_{s,n}^km_{s,n}^{M-k+1}+q_1^{n}\bigg{)}w(x,t;l_1)dxdt
=\int_0^T\int_{\Lambda} u_{s,n}(x,t;l_1,v_1)h(x,t)dxdt,\\
\int_{0}^T \int_{\Omega}\bigg{(}-a_1^su_{s,n}^2-a_1^s\sum_{k=1}^Mu_{s,n}^km_{s,n}^{M-k+1}+q_1^{n}\bigg{)}w(x,t;l_2)dxdt
=\int_0^T\int_{\Lambda} u_{s,n}(x,t;l_2,v_2)h(x,t)dxdt.
\end{eqnarray*}
From \eqref{umfg1}, we have
\begin{equation}\label{uNmN}
\begin{aligned}
&\int_{0}^T \int_{\Omega}\bigg{(}-a_1^s u_{s,n}^2-a_1^s \sum_{k=1}^Mu_{s,n}^km_{s,n}^{M-k+1}+q_1^{n}\bigg{)}w(x,t;l_1)dxdt\\
&=\int_{0}^T \int_{\Omega}\bigg{(}-a_1^su_{s,n}^2-a_1^s \sum_{k=1}^Mu_{s,n}^km_{s,n}^{M-k+1}+q_1^{n}\bigg{)}w(x,t;l_2)dxdt.
\end{aligned}
\end{equation}
Notice that $\{a_1^s\}_{s=1}^\infty\in\mathcal{A}, \{q_1^n\}_{n=1}^\infty\in\mathcal{B}$ are uniformly bounded and the solution $(u_{s,n}, m_{s,n})$ of the system \eqref{utmtN} have positive constant upper solutions. Then, taking $s\rightarrow \infty$ in \eqref{uNmN}, it follows from the completeness of $\{q_1^n(x,t)\}_{j=1}^{\infty}$ that
\begin{align}\label{w22}
w(x,t;l_1)=w(x,t;l_2)\ \ \mathrm{for} \ (x,t)\in D_T.
\end{align}
The equations for \eqref{w_n} correspond to $l_1(x,t)$ and $l_2(x,t)$, respectively, subtracted from each other, and combined with \eqref{w22}, we have
\begin{eqnarray*}
(l_1-l_2) w(x,t;l_1)=0\ \ \mathrm{for} \ (x,t)\in D_T.
\end{eqnarray*}
 By the maximum principle, we obtain $w(x,t;l_1)> 0$ in $D_T$, thus $l_1=l_2$ in $D_T$. The proof is complete.
\end{proof}

\begin{rem}
In the coupled system \eqref{utmtN}, if the coefficients $l,v$ depend only on the spatial variable $x$, the uniqueness result for determining the $l(x)$ and $v(x)$ can be obtained under the same conditions of Theorem \ref{thm2}. It is worth noting that the source terms $q_1,q_2$ still require the forms of separated variables, and the proof is similar to that of the coupled system with the homogeneous Dirichlet boundary in Theorem \ref{thm2}.
\end{rem}

\section*{Acknowledgments}
 The work of Hongyu Liu is supported by the Hong Kong RGC General Research Funds (projects 12302919, 12301420, and 11300821), the NSFC/RGC Joint Research Fund (project $\rm N_{-}CityU101/21$), the France-Hong Kong ANR/RGC Joint Research Grant, AHKBU203/19. The work of Guang-Hui Zheng is supported by the NSF of China (12271151) and the NSF of Hunan (2020JJ4166).

\end{document}